\documentclass[11pt]{article}

\newif\ifproblems

%%% mes additions
\ifproblems
  \newcommand{\og}{``}
  \newcommand{\fg}{''}
\else
  \usepackage[english,francais]{babel}
  \usepackage{hyperref}
\fi
  \usepackage[leqno]{amsmath}
  \usepackage{amsthm, amssymb}
  \usepackage[all,cmtip,poly]{xy}
  \usepackage{epsfig}
  % to make the input from xfig (.pstex_t) work:
  \newcommand{\color}[2][1]{}
%%%

\usepackage{amsfonts}
 
 \textwidth 13cm

\usepackage{fancyhdr}
\pagestyle{fancy}

%% L/C/R denote left/center/right header (or footer) elements
%% E/O denote even/odd pages

%% \leftmark, \rightmark are chapter/section headings generated by the 
%% book document class

\textwidth 13cm

\fancyhf{} % clear all
\fancyhead[CO]{\slshape\small Herzig and Tilouine, Conjecture de type de Serre pour $\GSp_4$}
\fancyfoot[CO]{\thepage}

\theoremstyle{plain} 
 \newtheorem{lem}{Lemme}[section]
 \newtheorem{pro}[lem]{Proposition}
 \newtheorem{thm}{Th{\'e}or{\`e}me}
 \newtheorem{cor}[lem]{Corollaire}
 \newtheorem{sublem}[lem]{Sous-lemme}

 \newtheorem{conj}{Conjecture}
 \setcounter{conj}{-1}

\theoremstyle{definition}
 \newtheorem{de}[lem]{D{\'e}finition}
 \newtheorem{rk}[lem]{Remarque}

 \newcommand\hfld[2]{\smash{\mathop{\hbox to 12mm{\rightarrowfill}}
      \limits^{\scriptstyle#1}_{\scriptstyle#2}}}
 \newcommand\hflg[2]{\smash{\mathop{\hbox to 6mm{\leftarrowfill}}
      \limits^{\scriptstyle#1}_{\scriptstyle#2}}}

 \newcommand\hfl[1]{\,\smash{\mathop{\hbox to 6mm{\rightarrowfill}}
      \limits^{\scriptstyle#1}}\,}

 \newcommand{\cK}{{\cal K}}

 \newcommand{\bF}{{\Bbb F}}

 \newcommand{\cO}{{\cal O}}

 \newcommand{\cW}{{\cal W}}

 \newcommand{\cV}{{\cal V}}

 \newcommand{\cH}{{\cal H}}

 \def\a{{\frak a}}
 \def\p{{\frak p}}
 \def\pb{\overline{\frak p}}

 \def\f{{\frak f}}
 \def\q{{\frak q}}

 \def\C{{\Bbb C}}
 \def\m{{\frak m}}
 \def\s{{\frak s}}

 \def\A{{\Bbb A}}
 
 \def\Z{{\Bbb Z}}
 \def\Zp{{{\Bbb Z}_p}}
 \def\Q{{\Bbb Q}}
 \def\R{{\Bbb R}}
 \def\C{{\Bbb C}}
 \def\F{{\Bbb F}}
 \def\T{{\Bbb T}}

 \def\G{{{\Bbb G}}}

 \def\adots{\mathinner{\mkern2mu\raise1pt\hbox{.}\mkern3mu\raise4pt\hbox{.}\mkern1mu\raise7pt\hbox{.}}}

\def\Ind{{\rm Ind}}

\DeclareMathOperator{\GL}{GL}

\DeclareMathOperator{\GSp}{GSp}
\DeclareMathOperator{\Sp}{Sp}
\DeclareMathOperator{\JH}{JH}
\newcommand{\Mor}{\operatorname{Mor}}
\newcommand{\Gal}{\operatorname{Gal}}
\newcommand{\ord}{\operatorname{ord}}
\newcommand{\Fr}{\operatorname{Fr}}
\newcommand{\gr}{\operatorname{gr}}
\newcommand{\nr}{\operatorname{nr}}
\newcommand{\diag}{\operatorname{diag}}
\newcommand{\spin}{\operatorname{spin}}
\newcommand{\fp}{\F_p}
\newcommand{\fpb}{\overline \F_p}

\newcommand{\soc}{\operatorname{soc}}
\newcommand{\sgn}{\operatorname{sgn}}
\newcommand{\simi}{{\operatorname{sim}}}

\renewcommand{\s}{^\times}
\newcommand{\dual}{^\vee}

\newcommand{\RR}{\ensuremath{\mathcal R}}
\newcommand{\qb}{\overline\Q}
\newcommand{\qp}{\Q_p}
\newcommand{\zp}{\Z_p}

\newcommand{\qpb}{\overline\Q_p}
\newcommand{\zpb}{\overline\Z_p}
\newcommand{\rhob}{\overline\rho}
\newcommand{\rhot}{\widetilde\rho}
\newcommand{\reg}{_\mathrm{r\acute{e}g}}
\newcommand{\rg}{\operatorname{rg}}
\newcommand{\ta}{\widetilde\alpha}

\newcommand{\td}{\widetilde\delta}

\newcommand{\into}{\hookrightarrow}

\newcommand{\congto}{\xrightarrow{\,\sim\,}}

\newcommand{\topspace}[1]{\vbox{\vspace*{#1}}}

\newcommand{\cbull}{\mathbin{\hspace{2pt}\begin{picture}(1,1)(0,-2.5)\circle*{2}\end{picture}\hspace{1pt}}}
\newcommand{\expbull}{{\mathop{\hspace{1.5pt}\begin{picture}(1,1)(0,-2)\circle*{2}\end{picture}\hspace{0.5pt}}}}

\renewcommand{\(}{\textup{(}}
\renewcommand{\)}{\textup{)}}

%%%%%% tiny margin comments

 \title{Conjecture de type de Serre\\ et formes compagnons pour $\GSp_4$}
 \author{F.\ Herzig\thanks{Partiellement soutenu par la NSF (grant DMS-0902044 et accord DMS-0635607).}\and J.\ Tilouine\thanks{Partiellement soutenu par le Projet Blanc ANR-10-BLAN 0114.}}
 \date{} %enleve pour l'instant (FH/Oct 2010)
 \begin{document}
 \maketitle 

% change the symbol for itemize + enumerate
\renewcommand{\labelitemi}{$\bullet$}
\renewcommand{\labelenumi}{\upshape(\roman{enumi})}

\begin{abstract}
  On pr\'esente une conjecture de type de Serre sur la modularit\'e d'une repr\'esentation galoisienne modulo~$p$ de rang~$4$
  \`a valeurs dans le groupe symplectique. Nous supposons que la repr\'esentation est irr\'eductible et impaire (au sens
  symplectique).  Nous formulons la condition de modularit\'e en utilisant la cohomologie \'etale et la cohomologie de de
  Rham alg\'ebrique des vari\'et\'es de Siegel de niveau premier \`a~$p$.  On se concentre sur le cas o\`u la repr\'esentation
  est ordinaire en~$p$ et on donne une liste des poids de Serre correspondante. Lorsque la repr\'esentation est mod\'er\'ee,
  nous conjecturons que chaque poids de cette liste est r\'ealis\'e, et sinon, nous d\'ecrivons un sous-ensemble des poids de
  la liste qui doivent \^ etre r\'ealis\'es.  Nous proposons une construction des classes de cohomologie de de Rham qui
  r\'ealisent certains de ces poids \`a l'aide du complexe BGG dual.
\end{abstract}

\selectlanguage{english}

\begin{abstract}
  We present a Serre-type conjecture on the modularity of four-dimensional symplectic mod~$p$ Galois representations.
  We assume that the Galois representation is irreducible and odd (in the symplectic sense). The modularity condition is formulated
  using the \'etale and the algebraic de Rham cohomology of Siegel modular varieties of level prime to~$p$. We concentrate
  on the case when the Galois representation is ordinary at~$p$ and we give a corresponding list of Serre weights. When
  the representation is moreover tamely ramified at~$p$, we conjecture that all weights of this list are modular,
  otherwise we describe a subset of weights on the list that should be modular. We propose a construction of de Rham
  cohomology classes using the dual BGG complex, which should realise some of these weights.
\end{abstract}

\selectlanguage{francais}

\section{Introduction}\label{sec:intro}

Dans cet article, on pr\'esente une conjecture de type de Serre sur la modularit\'e d'une repr\'esentation galoisienne modulo~$p$ de rang $4$ \`a valeurs dans le groupe symplectique 
$\rhob:\Gamma={\rm Gal}(\overline{\Q}/\Q)\rightarrow \GSp_4(\fpb)$. Nous supposons la repr\'esentation irr\'eductible et motiviquement impaire (c'est-\`a-dire, que $p=2$ ou bien que les espaces propres pour l'action de la conjugaison complexe pour les deux valeurs propres $1$ et $-1$ sont des plans lagrangiens).
Nous proposons une conjecture sur les poids, mais nous n'abordons pas la question des niveaux de la vari\'et\'e de Siegel dans la cohomologie de laquelle $\rhob$ appara\^it.

La question de la modularit\'e d'une telle repr\'esentation, au moins lorsque $\rhob\vert_{I_p}$ est suppos\'ee ordinaire et de Fontaine--Lafaille de poids
$p$-petits, a \'et\'e pos\'ee par l'un des auteurs en 1995 (\cite{T0} et sect.~\ref{sec:serre_conj} ci-dessous). Dans sa th\`ese, le
premier auteur a formul\'e pour ${\GL_n}_{/\Q}$ une conjecture de modularit\'e de type de Serre, en donnant la liste compl\`ete des poids
r\'eguliers possibles lorsque $\rhob|_{I_p}$ est semi-simple. Le formalisme qu'il a d\'evelopp\'e est valide pour des groupes r\'eductifs
plus g\'en\'eraux. Il produit des syst\`emes locaux en $\fpb$-vectoriels associ\'es \`a des $\fpb$-repr\'esentations irr\'eductibles de $\GL_n(\fp)$
sur les espaces localement sym\'etriques de niveau premier \`a~$p$ dans la cohomologie desquels devraient exister des classes propres pour
le syst\`eme de valeurs propres de Hecke associ\'e \`a $\rhob$. Ces syst\`emes locaux, appel\'es poids de Serre, remplacent le poids
$k(\rhob)$ de la conjecture de Serre \cite{Se}.

Nous discutons maintenant quelques conjectures et r\'esultats dans le cas de ${\GSp_4}_{/\Q}$ qui expliquent plusieurs poids de cette conjecture de type de
Serre. On formule la condition de modularit\'e en utilisant la cohomologie
\'etale et la cohomologie de de Rham alg\'ebrique des vari\'et\'es de Siegel de niveau premier \`a~$p$, et on se concentre sur le cas o\`u
$\rhob$ est ordinaire en~$p$.
 Dans le cas o\`u $\rhob\vert_{I_p}$ est de plus mod\'er\'ement ramifi\'ee, nous explicitons les poids de Serre pr\'edits. En
g\'en\'eral, notre liste compte vingt \'el\'ements. Parmi ceux-ci, huit ont une interpr\'etation en termes de formes compagnons
($p$-ordinaires). Ils correspondent de mani\`ere naturelle \`a des twists de $\rhob$ par des puissances convenables du caract\`ere
cyclotomique modulo~$p$ comme pour les formes compagnons dans la conjecture de Serre \cite{G}.
Quant aux douze autres poids on peut au moins exhiber, dans le cas o\`u la restriction $\rhob|_{D_p}$ \`a tout le groupe de d\'ecomposition est totalement d\'ecompos\'ee, des rel\`evements cristallins avec les poids de Hodge--Tate correspondants (voir la proposition~\ref{prop:cryst_lifts} ci-dessous).

Par ailleurs, lorsque $\rhob$ est $p$-ordinaire non n\'ecessairement mod\'er\'ee, d'exposants $i_0=0\leq i_1<i_2\leq
i_3<p-1$, nous conjecturons l'existence d'une forme cuspidale holomorphe propre $p$-ordinaire de poids $(k,\ell)$ pour
$\ell=i_1+2$ et $k=i_2+1$, de niveau premier \`a~$p$ et de repr\'esentation galoisienne r\'esiduelle~$\rhob$.  Sous des
hypoth\`eses de d\'ecomposabilit\'e partielle de $\rhob\vert_{I_p}$, il y a trois twists de $\rhob$ pour lesquelles nous
conjecturons qu'ils sont associ\'es \`a des formes cuspidales cohomologiques (holomorphes ou de Whittaker) de niveau premier
\`a~$p$ et ordinaires en $p$. Il y a quatre autres twists de $\rhob$ pour lesquels nous conjecturons aussi qu'ils sont
associ\'es \`a des formes cuspidales cohomologiques ordinaires, mais qui dans notre approche g\'eom\'etrique semblent moins
accessibles.  

Comme on le d\'ecrira avec plus de d\'etails dans la section~\ref{sec:twists}, ces quatre premiers twists correspondent de
mani\`ere naturelle aux repr\'esentants de Kostant dans le groupe de Weyl de $\GSp_4$ modulo le groupe de Weyl du sous-groupe
de Levi du parabolique de Siegel. Dans un travail r\'ecent~\cite{T2}, le deuxi\`eme auteur d\'emontre l'existence d'une forme
cuspidale $p$-adique (et conjecturalement classique) dans le Cas~1, sous certaines hypoth\`eses globales. La m\'ethode
consiste \`a g\'en\'eraliser le travail de Faltings--Jordan en utilisant le complexe BGG dual (introduit par~\cite{FC}, et
\'etudi\'e dans \cite{MT} et \cite{PT}). Il se peut que les deux autres cas non-triviaux puissent se d\'emontrer avec la
m\^eme m\'ethode, mais avec des difficult\'es techniques suppl\'ementaires.

Cependant, notons que, apr\`es une version ant\'erieure de cet article, notre conjecture pour les huit poids qu'on vient de
mentionner a \'et\'e r\'ecemment \'etablie par Gee et Geraghty \cite{GG} par une m\'ethode de rel\`evement automorphe d'une
repr\'esentation galoisienne r\'esiduelle modulaire.

Notons que \cite{Y2} \'etudie une question pr\'ealable \`a des g\'en\'eralisations ult\'erieures de la conjecture de type de
Serre: \'etant donn\'e $H$ r\'eductif sur $\Q_p$ et ${\rm Gal}(\overline{\Q}/\Q)\rightarrow H(\overline{\Q}_p)$ un
homomorphisme \og g\'eom\'etrique\fg, quel est le meilleur groupe r\'eductif $G_{/\Q}$ sur lequel chercher une repr\'esentation
automorphe $\pi$ et une repr\'esentation $r:{}^L G\rightarrow H$ telles que $\rho=r\circ\rho_\pi$ ($\rho_\pi$ associ\'ee \`a
$\pi$ par la correspondance de Langlands). Par exemple, pour nous si $H=\GSp_4$, la r\'eponse est $G=\GSp_4$ et $r=\spin$.

Le plan de l'article est le suivant. Dans la section~\ref{sec:notations} on introduit les notations li\'ees au groupe $\GSp_4$ et on discute la condition d'ordinarit\'e dans la section~\ref{sec:ord}. Puis on \'enonce les conjectures de type de Serre dans la section~\ref{sec:serre_conj}, on donne une explicitation de la liste des poids et on d\'ecrit les rel\`evements cristallins correspondants dans le cas totalement d\'ecompos\'e. Ensuite on \'etudie dans la section~\ref{sec:twists} en d\'etail les twists de $\rhob$ qui correspondent \`a des formes compagnons et \'enonce les conjectures correspondants. Finalement, en 
section~\ref{sec:bgg} on esquisse la m\'ethode, utilisant le complexe BGG 
dual, qui fournit une forme compagnon (modulo $p$ puis un rel\`evement $p$-adique) dans le premier
cas.

\textbf{Remerciements:} Le premier auteur tient \`a remercier Matthew Emerton, Toby Gee et David Geraghty pour des discussions utiles, ainsi
que l'I.H.\'E.S., l'Universit\'e de Paris~7 et l'IAS o\`u une partie de ce travail a \'et\'e r\'ealis\'ee. Le second auteur remercie
Kyoto University, Columbia University et l'Academia Sinica de Taipei o\`u une partie de ce travail a \'et\'e r\'ealis\'ee,
pour leur hospitalit\'e. Les auteurs remercient enfin le rapporteur dont les remarques ont am\'elior\'e la qualit\'e de cet article.

\section{Notations}\label{sec:notations}

Soit $(V,\psi)$ un $\Z$-module symplectique unimodulaire de rang $4$, de base $(e_1,e_2,e_3,e_4)$ avec $\psi(e_1,e_4)=\psi(e_2,e_3)=1$ et $\psi(e_i,e_j)=0$
si $i\leq j$ autres que $(1,4)$ et $(2,3)$. La matrice de $\psi$ dans cette base est
$$J=\left(\begin{array}{cccc}0&0&0&1\\0&0&1&0\\0&-1&0&0\\-1&0&0&0\end{array}\right)$$
Soit $G=\GSp_4(V,\psi)=\{X\in \GL_4 : {}^t XJX=\nu\cdot J\}$ le sch{\'e}ma en groupes r\'eductif connexe sur $\Z$ des
similitudes de $(V,\psi)$. Le facteur de similitude $\nu$ d{\'e}finit un caract{\`e}re $\nu_\simi:G\rightarrow \G_m$; son noyau est
le $\Z$-sch{\'e}ma en groupes $\Sp_4$.

\label{eq:s} Soit $s=\left(\begin{array}{cc}0&1\\1&0\end{array}\right)$, $s'=\left(\begin{array}{cc}0&1\\-1&0\end{array}\right)$, 
et $1_2=\left(\begin{array}{cc}1&0\\0&1\end{array}\right)$.
On note $B=TN$, resp.\ $Q=MU$, $P=M_1U_1$ les d{\'e}compositions de Levi standard du sous-groupe de Borel $B$ de $G$ stabilisateur du drapeau $\langle e_1\rangle\subset \langle e_1,e_2\rangle$, du parabolique de Siegel $Q$ stabilisateur du plan isotrope $ \langle e_1,e_2\rangle$,
resp.\ du parabolique de Klingen~$P$ stabilisateur de la droite $ \langle e_1\rangle$. On a
$$T=\left\{\left(\begin{array}{cccc}t_1&0&0&0\\0&t_2&0&0\\0&0&\nu\cdot t_2^{-1}&0\\0&0&0&\nu\cdot t_1^{-1}\end{array}\right)\right\},$$
$$N=\left\{\left(\begin{array}{cccc}1&x&*&*\\0&1&*&*\\0&0&1&-x\\0&0&0&1\end{array}\right)\right\}\cap G,$$
$$Q=\left\{\left(\begin{array}{cc}A&C\\0&D\end{array}\right)\right\}\cap G,$$
$$M=\left\{\left(\begin{array}{cc}A&0\\0&D\end{array}\right); {}^t \!AsDs=\nu\cdot 1_2\right\},$$
$$U=\left\{\left(\begin{array}{cc}1_2&C\\0&1_2\end{array}\right); sC\,\text{sym{\'e}trique}\right\},$$
$$P=\left\{\left(\begin{array}{ccc}a&*&*\\0&A&*\\0&0&{\rm det}\,A\cdot a^{-1}\end{array}\right); A\in \GL_2\right\}\cap G.$$
Notons que dans cette notation certaines {\'e}toiles (resp.\ certains z{\'e}ros) repr{\'e}sentent des matrices $1\times 2$ ou $2\times 1$.
De m{\^e}me, avec ces notations, on a
$$M_1=\left\{\left(\begin{array}{ccc}a&0&0\\0&A&0\\0&0&{\rm det}\,A\cdot a^{-1}\end{array}\right); A\in \GL_2\right\},$$
$$U_1=\left\{\left(\begin{array}{cccc}1&x&y&*\\0&1&0&y\\0&0&1&-x\\0&0&0&1\end{array}\right)\right\}.$$

 Soit $W_G=N_G(T)/T$ le groupe de Weyl de $(G,T)$. Son action sur le groupe des caract{\`e}res est donn{\'e}e par
$w\cdot \lambda (t)=\lambda(w^{-1}tw)$.
Il est engendr{\'e} par (les classes de) $s_0=\left(\begin{array}{cccc}s&0_2\\0_2&s\end{array}\right)$
et
$s_1=\left(\begin{array}{ccc}1&0&0\\0&s'&0\\0&0&1\end{array}\right)$.  Il admet la pr{\'e}sentation:
$$W_G=\langle s_0,s_1;s_0^2, s_1^2,(s_0 s_1)^4 \rangle.$$

Le groupe de Weyl $W_M$ de $M$ est engendr{\'e} par $s_0$. On a un syst{\`e}me de repr{\'e}sentants de Kostant
de $W_M\backslash W_G$ donn{\'e} par $W^M=\{1_4,s_1,s_1s_0,s_1 s_0 s_1\}$.

Soit $X(T)$ (resp.\ $Y(T)$) le groupe des caract{\`e}res (resp.\ cocaract{\`e}res) de $T$. On identifie $X(T)$ au r{\'e}seau de $\Z^3$ des triplets
$(a,b;c)\in\Z^3$ avec $c\equiv a+b\pmod 2$ par la formule
\[ \lambda:t=\diag(t_1,t_2,\nu t_2^{-1},\nu t_1^{-1})\mapsto t_1^a t_2^b \nu^{(c-a-b)/ 2}. \] 
Notons qu'on a alors $\lambda(\diag(z,z,z,z))=z^c$.  Avec ces notations, notre choix des racines simples de $G$ est $\alpha_0=(1,-1;0)$
et $\alpha_1=(0,2;0)$; $\alpha_0$ est la racine courte. De plus, le facteur de similitude $\nu_\simi : G \to \G_m$ d\'efini le poids de coordonn\'ees $(0,0;2)$.
Notons $X(T)_+ = \{(a,b;c) \in X(T) : a \ge b \ge 0 \}$ l'ensemble des poids dominants.
Les $\alpha_i$ correspondent aux r{\'e}flexions $s_i$. L'{\'e}l{\'e}ment $s_1 s_0$,
resp.\  $s_1 s_0 s_1$, envoie $(a,b;c)$ sur $(b,-a;c)$, resp.\ sur $(-b,-a;c)$.

\medskip

Soit $(\widehat{G},\widehat B,\widehat T)$ le groupe r{\'e}ductif dual de $(G,B,T)$, d\'etermin\'e \`a isomorphisme pr\`es. Fixons
un isomorphisme entre les donn\'ees radicielles bas\'ees $\psi_0(\widehat G) \cong \psi_0(G)^*$. Alors le groupe $W_G$ s'identifie canoniquement 
au groupe de Weyl $W_{\widehat{G}}=N_{\widehat{G}}(\widehat{T})/\widehat{T}$ de $\widehat{G}$.

\label{spin}Fixons de plus des \'epinglages de $(G,B,T)$ et de $(\widehat{G},\widehat B,\widehat T)$. Alors l'isomorphisme ${\spin}: \widehat{G}\cong G$
respecte ces \'epinglages et induit un certain isomorphisme $\psi_0(\widehat G) \cong \psi_0(G)$ sur les donn\'ees radicielles bas\'ees, ce qui
le caracterise. On donne une formule explicite pour ce dernier isomorphisme dans la section~\ref{sec:serre_conj},~\'equation \eqref{eq:11}.
L'isomorphisme $\spin$ applique le groupe de Weyl $W_{\widehat{G}}$ isomorphiquement sur $W_G=N_G(T)/T$.
Le compos{\'e}
\[ \iota : W_G = W_{\widehat G} \stackrel{\spin}{\longrightarrow} W_G \]
induit un automorphisme qui \'echange les deux g{\'e}n{\'e}rateurs $s_0$ et $s_1$. Notons que ceci est compatible avec l'invariance de la
pr{\'e}sentation de $W_G$ par {\'e}change de $s_0$ et $s_1$. Voir \cite{MT} pour les d{\'e}tails. Ainsi, via $\iota$, $s_1 s_0$ agit par
$(a,b;c)\mapsto(-b,a;c)$, et $s_1 s_0 s_1$ agit par $(a,b;c)\mapsto(-a,b;c)$.

\section{Ordinarit{\'e}}\label{sec:ord}

Consid{\'e}rons une repr{\'e}sentation cuspidale $\pi$ de $\GSp_4(\A)$ dont la composante {\`a} l'infini est dans la s{\'e}rie discr{\`e}te holomorphe de param{\`e}tre de Harish-Chandra $(a+2,b+1;a+b)$ avec $a\geq b\geq 0$; en posant $k=a+3$ et $\ell=b+3$, la construction de Harish-Chandra permet de lui associer des formes modulaires holomorphes de poids $(k,\ell)$ avec $k\geq \ell\geq 3$. On fixe une telle forme, que l'on note $f$.

Soit $\Gamma={\rm Gal}(\overline{\Q}/\Q)$; fixons un nombre premier~$p$ et consid{\'e}rons 
la repr{\'e}sentation galoisienne
$$\rho_{\pi,p}=\rho_{f,p}:\Gamma\rightarrow \GSp_4(\overline{\Q}_p)$$
non ramifi{\'e}e hors de ${\rm Ram}(\pi)\cup \{p,\infty\}$ et telle que le polyn{\^o}me caract{\'e}ristique du Frobenius
arithm{\'e}tique $\phi_\ell$ en $\ell\neq p$, $\ell\not\in {\rm Ram}(\pi)$ soit le polyn{\^o}me de Hecke $P_{\pi,\ell}(X)$,
c'est-{\`a}-dire le polyn{\^o}me tel que $P_{\pi,\ell}(\ell^{-s})$ soit l'inverse du facteur d'Euler en $\ell$ de la fonction $L$
spinorielle de $\pi$ d{\'e}cal\'ee de ${k+\ell-3\over 2}$. On note aussi $P_{f,\ell}(X)=P_{\pi,\ell}(X)$.

L'existence de cette repr{\'e}sentation r{\'e}sulte des travaux de R.~Taylor \cite{Ta}, Laumon \cite{La} et Weissauer \cite{We}, 
\cite{We2}, dont la derni\`ere r\'ef\'erence \'etablit la symplecticit\'e.
%mis {\`a} part la symplecticit{\'e}, qui r{\'e}sulterait de travaux non publi{\'e}s de Weissauer ou de travaux {\`a} para\^ itre de J.~Arthur. On la suppose acquise.

Soit $\cH_q$ la $\Z$-alg{\`e}bre de Hecke sph{\'e}rique de $G$ en $q$ (c'est le $\Z$-module des fonctions {\`a} support compact sur $G(\Q_q)$, biinvariantes
par $G(\Z_q)$). On note $T_{q,1}$, $T_{q,2}$, resp.\ $T_{q,0}$ les g{\'e}n{\'e}rateurs de $\cH_q$ associ{\'e}s {\`a} $\diag(1,1,q,q)$, $\diag(1,q,q,q^2)$, 
resp.\ $\diag(q,q,q,q)$.

Pour tout premier $q$ o{\`u} $\pi$ est non ramifi{\'e}e, on d{\'e}finit les valeurs propres $a_{q,i}$ des op{\'e}rateurs de Hecke $T_{q,i}$ ($i=1,2$). 

\begin{de} \label{df:ord} Supposons que $\pi$ est non ramifi{\'e} en~$p$.
  On dit que $\pi$ est \emph{ordinaire en~$p$} (ou \emph{$p$-ordinaire}) si l'une des conditions \'equivalentes
  suivantes est satisfaite. 

\begin{itemize}
\item $\ord_p(a_{p,1}) = 0$ et $\ord_p(a_{p,2}) = b =\ell-3$. 

\item On peut ordonner les racines du polyn\^ome de Hecke
\[ P_{\pi,p}(X) =P_{f,p}(X)= (X-\alpha)(X-\beta)(X-\gamma)(X-\delta) \]
de sorte que $\ord_p(\alpha) = 0$, $\ord_p(\beta) = \ell-2$, $\ord_p(\gamma) = k-1$ et $\ord_p(\delta) = k+\ell-3$.
\end{itemize}
L'\'equivalence provient de la formule $P_{f,p}(X)=X^4-a_{p,1}X^3+(pa_{p,2}+2p^{k+\ell-3}\eta(p))X^2-p^{k+\ell-3}a_{p,1}\eta(p)X+(p^{k+\ell-3}\eta(p))^2$, o\`u $\eta$ d\'esigne le caract\`ere de Dirichlet donnant la partie finie du caract\` ere central de $\pi$ 
(on l'appelle aussi le Nebentypus de $f$); pour cette formule, voir sect.~3.5 de \cite{Ta0}. 
On dit aussi que la forme $f$ est $p$-ordinaire. 
\end{de}

\begin{rk}
  Les valuations des racines de $P_{f,p}(X)$ sont deux \`a deux distinctes par la condition $k \ge \ell \ge 3$. De plus,
  $a_{p,1} \equiv \alpha \pmod {\mathfrak{m}_{\zpb}}$.
\end{rk}

Soit $\cO$ un anneau de valuation discr{\`e}te fini et plat sur $\Z_p$ contenu dans $\overline{\Q}_p$ suffisamment grand pour
que $\rho$ soit d{\'e}finie dessus, c'est-{\`a}-dire qu'il existe un $\cO$-r{\'e}seau $L$ stable par $\rho$. Soit $\varpi$ un
param{\`e}tre uniformisant de $\cO$, $\kappa=\cO/(\varpi)$ et soit $\overline{\rho}_{f,p}:\Gamma\rightarrow \GSp_4(\kappa)$ la
repr{\'e}sentation de $\Gamma$ sur $L/\varpi L$.
 
Soit $D_p$ un groupe de d{\'e}composition en~$p$ dans $\Gamma$; soit $I_p\subset D_p$ son sous-groupe d'inertie.  Soit
$\epsilon$, resp.\ $\omega$ le caract{\`e}re cyclotomique $p$-adique, resp.\ modulo~$p$. Pour tout nombre $u$ ($p$-adique ou mod~$p$) on
note $\nr(u)$ le caract\`ere non-ramifi\'e de~$D_p$ qui envoie le
Frobenius arithm\'etique sur~$u$.
Si $\pi$ est ordinaire en~$p$
(\cite{Hi}, \cite{T1}), on a par \cite{U}:
$$\rho_{f,p}\vert _{D_p}\sim \left( \begin{array}{cccc}\epsilon^{k+\ell-3}{\rm nr}\Big({\delta\over p^{k+\ell-3}}\Big)&*&*&*\\0&\epsilon^{k-1}
{\rm nr}\Big({\gamma\over p^{k-1}}\Big)&*&*\\0&0&\epsilon^{\ell-2}{\rm nr}\Big({\beta\over p^{\ell-2}}\Big)&*\\0&0&0&
{\rm nr}(\alpha)\end{array}\right),$$
donc
$$\rho_{f,p}\vert_{I_p}\sim \left(\begin{array}{cccc}\epsilon^{k+\ell-3}&*&*&*\\0&\epsilon^{k-1}&*&*\\0&0&\epsilon^{\ell-2}&*\\0&0&0&1\end{array}\right)$$
et aussi
$$\overline{\rho}_{f,p}\vert _{I_p}\sim \left(\begin{array}{cccc}\omega^{k+\ell-3}&*&*&*\\0&\omega^{k-1}&*&*\\0&0&\omega^{\ell-2}&*\\0&0&0&1\end{array}\right).$$

\section{Conjecture de type de Serre pour $\GSp_4$}\label{sec:serre_conj}

Soit $S$ un ensemble fini de places de $\Q$ contenant $p$ et $\infty$. Pour chaque $\ell\in S$, on fixe un sous-groupe d'inertie $I_\ell\subset \Gamma$.
Soit $\kappa$ un corps fini de caract{\'e}ristique~$p$.
Fixons une fois pour toutes un isomorphisme $\qpb \congto \C$.

Soit $\overline{\rho}:\Gamma\rightarrow \GSp_4(\kappa)$ une repr{\'e}sentation $S$-ramifi{\'e}e, c'est-{\`a}-dire 
un homomorphisme continu tel que $\overline{\rho}(I_\ell)=1$ pour tout nombre premier $\ell\not\in S$. 
Supposons que $\overline{\rho}$ soit absolument irr{\'e}ductible.
Dans \cite{Ta0}, l'un des auteurs a conjectur{\'e} que, sous l'hypoth{\`e}se que $\overline{\rho}$ est modulaire (c'est-{\`a}-dire provient d'une forme cuspidale cohomologique $\pi$ non ramifi\'ee en $p$ 
pour $\GSp_4(\A)$) et que sa restriction {\`a} $D_p$ est 
triangulaire sup{\'e}rieure avec des caract{\`e}res sur la diagonale deux {\`a} deux distincts,
il en est de m{\^e}me pour ses d{\'e}formations de m{\^e}me type (un ordre des caract{\`e}res de la diagonale {\'e}tant fix{\'e}).

Il a introduit (\cite{T0}, section~9) une condition n{\'e}cessaire de modularit{\'e} pour $\overline{\rho}$, appel{\'e}e l'imparit{\'e} motivique:
 $\overline{\rho}$ est dite \emph{motiviquement impaire} si $p=2$ ou
$\nu_\simi\circ\overline{\rho}(c)=-1$ (pour une conjugaison complexe $c$) ou, de mani{\`e}re {\'e}quivalente si
$$\overline{\rho}(c)\sim\left(\begin{array}{cccc}1&0&0&0\\0&1&0&0\\0&0&-1&0\\0&0&0&-1\end{array}\right),
$$
la conjugaison ayant lieu dans $\GSp_4$.

Il a pos{\'e} la question suivante (Cours au Tata Institute, 1995, non publi{\'e}): si la repr{\'e}sentation
$\overline{\rho}$ est absolument irr{\'e}ductible, motiviquement impaire et $p$-ordinaire d'exposants $i_0\leq i_1\leq
i_2\leq i_3$ (voir l'\'equation \eqref{eq:16} ci-dessous) o\`u les $i_j$ sont deux {\`a} deux distincts avec $i_3-i_0<p-1$,
provient-elle d'une repr{\'e}sentation cuspidale de $\GSp_4(\A)$ avec $\pi_\infty$ dans la s{\'e}rie discr{\`e}te?

On peut formuler une conjecture l{\'e}g{\`e}rement plus g{\'e}n{\'e}rale et plus pr{\'e}cise. Supposons que la
repr{\'e}sentation $\overline{\rho}$ est $p$-ordinaire. On a donc {\`a} conjugaison pr{\`e}s dans $\GL_4$:
\begin{equation}
  \label{eq:16}
  \overline{\rho}\vert _{D_p}\sim \left( \begin{array}{cccc}\omega^{i_3}{\rm nr}(u_3)&*&*&*\\0&\omega^{i_2}{\rm nr}(u_2)&*&*\\0&0&\omega^{i_1}{\rm nr}(u_1)&*\\
      0&0&0&\omega^{i_0}{\rm nr}(u_0)\end{array}\right).
\end{equation}

Apr{\`e}s torsion par une puissance de $\omega$, on peut supposer que les exposants satisfont $i_0=0\leq i_1\leq i_2\leq i_3$ et $i_j\leq j(p-2)$ pour $j=1$, $2$, $3$.
On a n\'ecessairement $i_0 + i_3 = i_1 + i_2$. Cette relation est \'evidente sous l'hypoth\`ese $i_3 < p-1$, mais en fait, elle est toujours satisfaite car on peut toujours supposer que la conjugaison~\eqref{eq:16} a lieu dans $\GSp_4$, comme nous l'a fait remarquer D.\ Prasad (lettre \`a l'un des auteurs).

Notons que ce choix d'entiers croissants $i_j\in\Z$ n'est pas unique en g\'en\'eral, m\^eme si on impose $i_j\leq j(p-2)$.
Une fois un tel ordre fix\'e, supposons en outre que $i_1<i_2$; il existe alors un couple unique d'entiers $(k,\ell)$ avec
$k\geq \ell\geq 2$ tel que $i_1=\ell-2$ et $i_2=k-1$.  Si de plus les $i_j$ sont deux \`a deux
distincts, on a m{\^e}me $\ell\geq 3$ de sorte qu'il existe $(a,b)$ avec $a\geq b\geq 0$ tel que $i_1=b+1$ et $i_2=a+2$, et
$k=a+3$, $\ell=b+3$.

Nous appelerons $(k,\ell)$ le poids modulaire du couple $(\overline{\rho},(i_j)_{j=1,\ldots,3})$ constitu{\'e} de la repr{\'e}sentation $p$-ordinaire $\overline{\rho}$ et de la suite ordonn{\'e}e $(0,i_1,i_2,i_3)$ des exposants.

\begin{de} On dira que les exposants sont \emph{$p$-petits} si apr{\`e}s torsion par une puissance de $\omega$, on a
  $0=i_0\leq i_1\leq i_2 \leq i_3<p-1$ .
\end{de}

Dans ce cas, le couple d'entiers $(k,\ell)$ satisfait la condition $k+\ell-3<p-1$.

\begin{conj}\label{conj:classical}
Soit $(\overline{\rho},(i_j)_j)$ un couple constitu\'e d'une repr\'esentation irr{\'e}ductible motiviquement impaire et $p$-ordinaire 
et d'un ordre des exposants avec $i_0=0$, $i_1<i_2$. Supposons ces exposants $p$-petits. Soit $(k,\ell)$ son poids modulaire. Supposons
 que $\rhob$ soit de Fontaine--Laffaille en~$p$ \(de poids dans $[0,p-2]$\); il existe alors une forme cuspidale holomorphe $f$ de poids $(k,\ell)$, $p$-ordinaire et de niveau premier {\`a}~$p$, associ{\'e}e {\`a} $\overline{\rho}$, au sens que $\overline{\rho}_{f,p}$ est isomorphe {\`a} $\overline{\rho}$.
\end{conj}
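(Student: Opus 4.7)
La strat\'egie que je proposerais suit celle \'evoqu\'ee dans l'introduction : une g\'en\'eralisation de la m\'ethode de Faltings--Jordan au moyen du complexe BGG dual pour ${GSp_4}_{/\Q}$, suivant \cite{MT} et \cite{PT}.

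Le point de d\'epart serait de montrer que $\rhob$ appara\^it dans la cohomologie \'etale mod~$p$ d'une vari\'et\'e de Siegel $Y_K$ de niveau $K$ premier \`a $p$, \`a coefficients dans un syst\`eme local $V_\mu(\fpb)$ associ\'e \`a un poids $\mu$ convenablement choisi en fonction des exposants $(i_j)$. C'est en lui-m\^eme une forme faible de la conjecture : il faudrait soit le supposer comme hypoth\`ese globale (comme dans le cas non trivial d\'emontr\'e par le deuxi\`eme auteur), soit le d\'eduire d'un r\'esultat de modularit\'e potentielle dans la veine de Taylor--Wiles pour $GSp_4$. C'est, \`a mon avis, le principal obstacle \`a une preuve inconditionnelle.

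Une fois ce point de d\'epart acquis, la comparaison $p$-adique entre cohomologie \'etale et cohomologie de de Rham, jointe \`a l'hypoth\`ese de Fontaine--Laffaille, transporterait la classe \'etale en une classe propre dans $H^\bullet_{\mathrm{dR}}(Y_K,\cV_\mu)$ sur $\fpb$. Le complexe BGG dual fournit alors une r\'esolution de $\cV_\mu$ par des fibr\'es automorphes coh\'erents indic\'es par les repr\'esentants de Kostant $w\in W^M=\{1_4,s_1,s_1s_0,s_1s_0s_1\}$, les poids associ\'es \'etant les $w\cdot \mu$ d\'ecrits \`a la fin de la Section~\ref{sec:notations}. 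L'ordinarit\'e \'etablie \`a la D\'efinition~\ref{df:ord}, combin\'ee \`a la compatibilit\'e entre l'action de Hecke et la filtration de Hodge, permettrait d'isoler la composante associ\'ee \`a $w=1_4$, produisant une classe propre dans la cohomologie coh\'erente en degr\'e~$0$ du fibr\'e automorphe $\omega_{(k,\ell)}$ sur $\fpb$.

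La derni\`ere \'etape consisterait \`a la relever en caract\'eristique nulle par un argument de contr\^ole \`a la Hida, rendu possible par la $p$-ordinarit\'e, pour obtenir une forme cuspidale holomorphe propre classique de poids $(k,\ell)$, $p$-ordinaire, de niveau premier \`a $p$, et dont la repr\'esentation galoisienne r\'esiduelle est isomorphe \`a $\rhob$. Les obstructions techniques secondaires se concentreront dans l'\'eventuelle torsion de la cohomologie localis\'ee \`a l'id\'eal maximal de Hecke associ\'e \`a $\rhob$ et dans le fait que la r\'eduction mod~$p$ du syst\`eme de coefficients classique est g\'en\'eralement r\'eductible, ce qui peut faire appara\^itre $\rhob$ via plusieurs poids de Serre de la liste \`a vingt \'el\'ements ; les hypoth\`eses $i_1<i_2$ et de $p$-petitesse sont pr\'ecis\'ement l\`a pour \'eviter les d\'eg\'en\'erescences les plus pathologiques dans cette analyse.
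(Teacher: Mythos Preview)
L'\'enonc\'e en question est une \emph{conjecture} (Conjecture~\ref{conj:classical}), et le papier ne fournit aucune d\'emonstration : il s'agit d'un probl\`eme ouvert, analogue symplectique de la conjecture de Serre pour $GL_2$. Il n'y a donc pas de preuve du papier \`a laquelle comparer votre proposition.

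Cela dit, votre strat\'egie contient une circularit\'e que vous identifiez vous-m\^eme : le \og point de d\'epart\fg\ --- faire appara\^itre $\rhob$ dans la cohomologie \'etale d'une vari\'et\'e de Siegel --- est pr\'ecis\'ement la conjecture de modularit\'e que l'on cherche \`a \'etablir. Le supposer comme hypoth\`ese ne laisse rien \`a d\'emontrer d'essentiel ; le d\'eduire d'un r\'esultat de type Taylor--Wiles pour $GSp_4$ exigerait lui aussi un point de d\'epart modulaire. La m\'ethode du complexe BGG dual que vous invoquez est bien utilis\'ee dans le papier (Section~\ref{sec:bgg}), mais dans un contexte tout diff\'erent : on y part d'une forme $f$ d\'ej\`a donn\'ee, donc d'une $\rhob$ d\'ej\`a modulaire, et l'on construit des \emph{formes compagnons} pour des twists de $\rhob$ sous des hypoth\`eses de d\'ecomposabilit\'e partielle. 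Ce n'est pas un outil pour produire la modularit\'e \emph{ex nihilo}, mais pour transf\'erer une modularit\'e connue d'un poids \`a un autre.

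En r\'esum\'e : votre proposition d\'ecrit honn\^etement une heuristique, mais ne constitue pas une preuve, et le papier n'en pr\'etend pas non plus donner une.
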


\begin{rk}
  On peut montrer que lorsque les exposants sont deux \`a deux distincts, la repr\'esentation $\rhob$ est de
  Fontaine--Laffaille en~$p$ de poids dans $[0,p-2]$, en bref $FL^{[0,p-2]}$, si et seulement si les \'etoiles de la
  premi\`ere surdiagonale de $\rhob\vert_{D_p}$ sont peu ramifi\'ees au sens de Serre. Cette condition se produit toujours,
  sauf s'il existe $j$ tel que $i_{j+1}=i_j+1$ et $u_{j+1}=u_j$ (lorsqu'aucun tel $j$ n'existe, on peut trouver la construction d'un rel\`evement cristallin
dans le lemme 7.6.7 de \cite{GG}, par exemple). 
Ainsi la condition $FL^{[0,p-2]}$ est \og g\'en\'eriquement
  satisfaite\fg.  
\end{rk}

\vskip 3mm

\noindent{\bf Pr{\'e}ambules {\`a} la th{\'e}orie des poids de Serre:}

Lorsque $\rhob$ est de Fontaine--Laffaille en~$p$, la forme $f$ conjecturale intervient dans la cohomologie de de Rham de la
vari{\'e}t{\'e} de Siegel $X$ de niveau premier {\`a}~$p$, {\`a} coefficients dans le fibr{\'e} {\`a} connexion associ{\'e}
{\`a} la repr{\'e}sentation $V_\lambda$ de $\GSp_4$ de plus haut poids $\lambda=(a,b;a+b)$ dans les notations de \cite{FC},
chap.~VI. Cette remarque est en accord avec la terminologie de \cite{T1}, par exemple, o\`u l'on dit qu'un poids $(k,\ell)$
avec $k \ge \ell \ge 3$ est cohomologique.

Par des th{\'e}or{\`e}mes de comparaison classiques, le syst{\`e}me de valeurs propres de Hecke de $f$ intervient aussi dans la cohomologie {\'e}tale $H^3_{et}(X\otimes \overline{\Q},V_\lambda(\overline{\Q}_p))$ du syst{\`e}me local associ{\'e} {\`a} la repr{\'e}sentation $V_\lambda$. Lorsque la 
repr\'esentation r\'esiduelle $\overline{\rho}$ est absolument irr\'eductible, pour tout choix fix\'e d'un $\zp$-r\'eseau stable $V_\lambda$,
il en r{\'e}sulte que la r{\'e}duction modulo~$p$ du r{\'e}seau 
$${\rm Im}\left(H^3_{et}(X\otimes \overline{\Q},V_\lambda(\overline{\Z}_p))\rightarrow H^3_{et}(X\otimes \overline{\Q},V_\lambda(\overline{\Q}_p))\right)$$
contient la contragr{\'e}diente $\overline{\rho}^\vee$ de $\overline{\rho}$. 

On se propose de donner dans ce qui va suivre une formulation de la question de la modularit{\'e} cohomologique de $\overline{\rho}$ purement en caract{\'e}ristique $p$, sans aborder la question de l'existence d'un rel{\`e}vement automorphe classique en caract{\'e}ristique z{\'e}ro. Il faut cependant {\it a priori} distinguer la question de l'existence d'une classe de cohomologie de de Rham modulo~$p$ ou d'une classe de cohomologie {\'e}tale {\`a} coefficients dans des syst{\`e}mes locaux en $\overline{\F}_p$-vectoriels, qui repr{\'e}sente $\overline{\rho}$. On dit qu'une classe de cohomologie de de Rham $c$ repr{\'e}sente $\overline{\rho}$ si elle est propre pour les correspondances de Hecke pour tout $\ell\notin S\cup\{p\}$ premier, et que le polyn{\^o}me de Hecke $P_{c,\ell}(X)$ est le polyn{\^o}me caract{\'e}ristique de l'image du Frobenius (g{\'e}om{\'e}trique) par la contragr{\'e}diente $\overline{\rho}^\vee$.

La comparaison entre les deux formulations, celle en termes d'une classe de cohomologie de de Rham, et celle en termes d'une classe de cohomologie {\'e}tale, repose sur la validit{\'e} du th{\'e}or{\`e}me de comparaison {\'e}tale--cristallin modulo~$p$ \cite{Fa} de Faltings. Les poids permis {\it a priori} pour ce th{\'e}or{\`e}me de comparaison modulo~$p$, sont les $(k,\ell)$ tels que $k\geq \ell\geq 3$ avec $k+\ell-3<p-1$ qui correspondent, comme on le verra, \`a des points entiers de l'alc{\^o}ve fondamentale, alors que la conjecture ci-dessous qui donne les poids de Serre possibles pour une repr{\'e}sentation r{\'e}siduelle $\overline{\rho}$ fait intervenir des poids $p$-restreints qui peuvent {\^e}tre n{\'e}anmoins hors de cette alc{\^o}ve. Pour ces poids, la comparaison modulo~$p$ n'est pas valable, {\`a} moins que ces classes ne se rel{\`e}vent en caract{\'e}ristique nulle car la limitation sur les poids pour le th{\'e}or{\`e}me de comparaison dispara{\^\i}t alors.

Le travail r{\'e}cent d'un des auteurs \cite{He}, a permis de formuler une conjecture presque compl{\`e}te dans le cas de
$\GL_n$ lorsque la repr{\'e}sentation galoisienne r{\'e}siduelle est mod{\'e}r{\'e}e en~$p$.  Elle est exprim{\'e}e en termes
de classes de cohomologie singuli{\`e}re {\`a} coefficients dans des syst{\`e}mes locaux en $\overline{\F}_p$-vectoriels, et
non pas en termes de cohomologie {\'e}tale, puisqu'il n'y a pas en g{\'e}n{\'e}ral d'action de Galois sur la cohomologie des
espaces localement sym{\'e}triques, non alg{\'e}briques, associ{\'e}s {\`a} $\GL_n$.

Dans le cas de $G=\GSp_4$, l'objet de la pr{\'e}sente section est de sp{\'e}cifier, sous des hypoth{\`e}ses similaires, les
syst{\`e}mes locaux {\'e}tales en $\overline{\F}_p$-vectoriels sur la vari{\'e}t{\'e} de Siegel sur $\Q$ de niveau premier
{\`a}~$p$ dans la cohomologie desquels la contragr{\'e}diente $\overline{\rho}^\vee$ intervient. Ces syst{\`e}mes locaux sont
associ{\'e}s {\`a} des repr{\'e}sentations irr{\'e}ductibles du groupe fini $G(\F_p)$ sur $\overline{\bF}_p$ qu'on appelera
poids de Serre. {\it A posteriori}, on peut interpr{\'e}ter la question de 1995 comme celle de l'existence de l'une d'entre
elles, de plus haut poids $(a,b;a+b)$. Nous donnons ici la liste compl{\`e}te des poids de Serre r\'eguliers (voir d\'ef.~\ref{df:reg}) pour
$\overline{\rho}$ $p$-ordinaire mod{\'e}r{\'e}e.

Nous commen\c cons en posant la d{\'e}finition suivante.

\begin{de}
  Un \emph{poids de Serre} est une repr{\'e}sentation irr{\'e}ductible du groupe fini $G(\fp)$ sur $\fpb$, {\`a} isomorphisme pr{\`e}s.
\end{de}

Cette notion apparaissait implicitement dans~\cite{AS} pour $\GL_2$ puis a \'et\'e utilis\'ee syst{\'e}matiquement dans~\cite{ADP} et~\cite{BDJ}.

\emph{Jusqu'\`a la prop.~\ref{prop:conj_combi} on consid{\'e}rera $G$ comme groupe sur $\F_p$.}
Pour $\lambda \in X(T)$ dominant, le $G$-module dit de Weyl dual est d{\'e}fini par l'induction alg\'ebrique
\begin{align*}
W(\lambda) &= \Ind_{B^-}^G (\fpb(\lambda)) \\
 &= \{ f \in \Mor(G,\A^1) : f(bg) = \lambda(b) f(g)\, \forall \,g\in G,\ b\in B^-\}
\end{align*}
o{\`u} $B^-$ d{\'e}signe le Borel oppos{\'e} de $B$. Le plus grand $G$-sous-module semisimple $F(\lambda) := \soc_G W(\lambda)$ est isomorphe au $G$-module simple de plus haut poids $\lambda$ \cite{J1}, II.2.4. 

\begin{de}
   On introduit le sous-ensemble $X_1(T)$ de $X(T)$ des \emph{poids $p$-restreints},
   \begin{align*}
     X_1(T)&=\{\lambda \in X(T): 0 \leq \langle \lambda, \alpha_i^\vee\rangle < p \quad \forall i \} \\
     &= \{ (a,b;c) \in X(T) : 0 \le a-b < p,\ 0 \le b < p \},
   \end{align*}
   ainsi que le sous-groupe 
   \begin{align*}
     X^0(T) &=\{\lambda \in X(T): \langle \lambda, \alpha_i^\vee\rangle = 0 \quad \forall i \} \\
     &= \{ (0,0;c) : c \in 2\Z \}.
   \end{align*}
\end{de}

Comme $G' \cong \Sp_4$ est simplement connexe, on a par une g{\'e}n{\'e}ralisation simple d'un th{\'e}or{\`e}me classique de
Steinberg (voir~\cite{He}, prop.~1.3 dans l'appendice).

\begin{pro}
  Tout poids de Serre est la restriction aux points $\fp$-rationnels d'un $G$-module simple $F(\lambda)$ avec $\lambda \in X_1(T)$.
  Si $\lambda$, $\lambda' \in X_1(T)$ on a $F(\lambda) \cong F(\lambda')$ comme repr{\'e}sentation de $G(\fp)$ si et seulement si
  $\lambda - \lambda' \in (p-1)X^0(T)$.
\end{pro}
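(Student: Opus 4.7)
Le plan est d'appliquer la version du théorème de Steinberg valable pour les groupes réductifs à groupe dérivé simplement connexe, telle qu'exposée dans~\cite{J2}, partie~II.3. Le cas classique traite des groupes semisimples simplement connexes $H$ déployés sur $\F_p$, pour lesquels l'application $\mu\mapsto F(\mu)|_{H(\F_p)}$ établit une bijection entre $X_1(T_H)$ et l'ensemble des classes d'isomorphisme de représentations irréductibles de $H(\F_p)$ sur $\fpb$. La généralisation à $G=GSp_4$ repose sur le fait que $G'=Sp_4$ est simplement connexe et que $G=G'\cdot Z(G)$ (avec $Z(G)\cong\G_m$) comme schémas en groupes sur~$\fpb$.

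Pour la première assertion, on vérifie d'abord que $F(\lambda)|_{G(\F_p)}$ reste irréductible pour $\lambda\in X_1(T)$, ce qui découle de l'irréductibilité de $F(\lambda)|_{G'(\F_p)}$ par le théorème classique de Steinberg appliqué à $G'$. Pour la surjectivité, on procède soit par un argument de comptage des classes de conjugaison $p$-régulières de $G(\F_p)$, soit en utilisant la théorie de Clifford pour étendre à $G(\F_p)$ une représentation irréductible de $G'(\F_p)\cdot Z(G)(\F_p)$ construite à partir de $F(\mu)|_{G'(\F_p)}$ et d'un caractère central convenable. Pour la seconde assertion, on observe que $X^0(T)$ s'identifie au groupe des caractères algébriques de~$G$, qui se factorisent tous par $\nu$, et qu'un tel caractère $\chi$ est trivial sur $G(\F_p)$ si et seulement si $\chi\in(p-1)X^0(T)$, puisque $\nu(G(\F_p))=\F_p^\times$ est d'ordre~$p-1$. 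Si $\lambda-\lambda'=(p-1)\eta$ avec $\eta\in X^0(T)$, alors $F(\lambda)\cong F(\lambda')\otimes F((p-1)\eta)$ comme $G$-modules ($F(\eta)$ étant de dimension $1$), d'où l'isomorphisme après restriction à $G(\F_p)$; réciproquement, l'injectivité dans Steinberg classique restreinte à $G'(\F_p)$ donne $\lambda-\lambda'\in X^0(T)$, et la coïncidence des caractères centraux de $F(\lambda)$ et $F(\lambda')$ sur $Z(G)(\F_p)$ force la divisibilité par $p-1$.

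Le point le plus délicat sera le passage du cas semisimple simplement connexe au cas réductif: le morphisme $G'(\F_p)\cdot Z(G)(\F_p)\to G(\F_p)$ n'est pas surjectif en général (son image est d'indice~$2$ pour $p$ impair, car $\nu$ envoie $Z(G)(\F_p)$ sur $(\F_p^\times)^2$), ce qui rend non trivial l'argument assurant que toute représentation irréductible de $G(\F_p)$ provient d'un $F(\lambda)$ algébrique. La théorie de Clifford, jointe à une analyse fine des caractères centraux et de l'action du quotient $G(\F_p)/\bigl(G'(\F_p)\cdot Z(G)(\F_p)\bigr)$, permet de contourner cette difficulté, dans l'esprit du traitement détaillé de~\cite{J2}.
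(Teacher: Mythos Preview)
The paper does not actually supply a proof of this proposition: it is stated with the attribution~\cite{J2} (personal correspondence from Jantzen), preceded only by the remark that since $G'\cong Sp_4$ is simply connected, the result follows from a simple generalisation of Steinberg's classical theorem. There is therefore no detailed argument in the paper to compare your proposal against.

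That said, your sketch is the standard route to such a generalisation and is essentially correct. A couple of minor comments: for the irreducibility of $F(\lambda)|_{G(\F_p)}$ when $\lambda\in X_1(T)$, you can argue even more directly by noting that $F(\lambda)|_{G'}$ is the simple $G'$-module of highest weight $\lambda|_{T\cap G'}$, which lies in $X_1(T\cap G')$, so Steinberg for $G'$ gives irreducibility already over $G'(\F_p)\subset G(\F_p)$. For the surjectivity, the counting argument you mention is the cleanest way around the index-$2$ obstruction you correctly identify: the number of $p$-regular conjugacy classes of $G(\F_p)$ equals $p^2(p-1)=|X_1(T)/(p-1)X^0(T)|$, so once you know the $F(\lambda)|_{G(\F_p)}$ are irreducible and pairwise non-isomorphic on a set of representatives for $X_1(T)/(p-1)X^0(T)$, you are done without invoking Clifford theory.
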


\begin{de}\label{df:reg}
  Un poids de Serre $F$ est dit \emph{r{\'e}gulier} si $F \cong F(\lambda)$ avec $0 \le \langle \lambda, \alpha_i^\vee\rangle < p-1$ pour $i = 1$, $2$.
  On dit de m{\^e}me qu'un tel $\lambda$ est $p$-r{\'e}gulier et on note $X\reg(T) \subset X_1(T)$ l'ensemble des poids $p$-r{\'e}guliers.
\end{de}

En particulier, il y a $p^2(p-1)$ poids de Serre dont $(p-1)^3$ sont r\'eguliers.

Soit $X$ le mod{\`e}le canonique, d{\'e}fini sur $\Q$, de la tour $(X_K)_K$ des vari{\'e}t{\'e}s de Shimura pour $G$ dont le
niveau $K$ parcourt l'ensemble des sous-groupes ouverts compacts de $G(\widehat{\Z})=G(\widehat{\Z}^{p})\times G(\Z_p)$ de la
forme $K=K^p\times G(\Z_p)$ pour un $K^p$ arbitraire de niveau premier {\`a}~$p$.  Soit $f : A\rightarrow X$ la
vari{\'e}t{\'e} ab{\'e}lienne universelle principalement polaris{\'e}e avec structure de niveau premier {\`a}~$p$. En fixant
un point g{\'e}om{\'e}trique $\overline{x}$ de $X$, on obtient une repr{\'e}sentation continue
$\phi_{\overline{x}}:\pi_1(X,\overline{x})\rightarrow \GSp(A_{\overline{x}}[p]^\vee) \cong G(\fp)$ associ\'ee au faisceau
\'etale localement constant $R^1 f_* \Z/p$. Pour toute repr{\'e}sentation $(W,r)$ de $G$ d{\'e}finie sur $\bF_p$, la
compos{\'e}e $r\circ\phi_{\overline{x}}$ fournit par la th{\'e}orie du $\pi_1$ un faisceau {\'e}tale localement constant
$W_X$ sur $X$.

\begin{de}
  On dit que $\rhob$ est \emph{modulaire de poids de Serre $F(\lambda)$} si sa contragr{\'e}diente $\rhob^\vee$ intervient
  comme sous-repr{\'e}sentation du $\Gamma$-module $H^\expbull_{et} (X\times\overline{\Q},F(\lambda)_X)$. On note
  $\cW(\overline\rho)$ l'ensemble de ces poids de Serre, et $\cW\reg(\overline\rho)$ le sous-ensemble de ceux qui sont
  r{\'e}guliers.
\end{de}

Supposons maintenant que $\rhob$ soit mod{\'e}r{\'e}ment ramifi{\'e}e en~$p$.
Pour {\'e}noncer la conjecture, on va d{\'e}finir une repr{\'e}sentation
$V(\rhob|_{I_p})$, de dimension finie sur $\qpb$, du groupe fini $G(\fp)$,
ainsi qu'un op{\'e}rateur $\RR$ envoyant les poids de Serre vers les poids de Serre r{\'e}guliers.

Commen\c cons par l'operateur $\RR$. Soient $\rhot = (2,1;3) \in X(T)$ la demi-somme des racines positives {\`a} translation par $X^0(T)$ pr{\`e}s, et $w_0 = (s_0s_1)^2$ 
l'{\'e}l{\'e}ment le plus long de $W_G$. Si $w \in W_G$ et $\mu \in X(T)$ on note
\[ w\cbull \mu = w(\mu + \rhot)-\rhot. \]
Pour tout $\mu \in X(T)$ on d{\'e}finit $F(\mu)\reg$ de la mani{\`e}re
suivante: on choisit $\mu' \in X\reg(T)$ tel que $\mu - \mu' \in
(p-1)X(T)$ et on pose $F(\mu)\reg = F(\mu')$.
Il est facile de v{\'e}rifier que c'est ind{\'e}pendant du choix de $\mu'$. Finalement on d{\'e}finit
\[ \RR(F(\lambda)) := F(w_0 \cbull (\lambda - p\rhot))\reg. \]

Passons {\`a} $V(\rhob|_{I_p})$. Dans ce qui suit, on utilisera le groupe dual $\widehat G$ (sur $\fp$), identifi\'e avec
$\GSp_4$ par l'isomorphisme $\spin$ (voir p.~\pageref{spin}). Pendant les paragraphes suivants on utilisera le langage
classique pour les groupes alg\'ebriques, donc on \'ecrira $G$ \`a la place de $G(\fpb)$, etc.  On note $\Fr$ le morphisme de
Frobenius sur $G$ ou $\widehat G$.  On a une dualit{\'e} entre les couples $(\T,\theta)$ constitu{\'e}s d'un tore maximal
rationnel $\T \subset G$ et d'un caract{\`e}re $\theta : \T^{\Fr} \to \qpb^\times$ et les couples $(\widehat \T,\sigma)$
constitu{\'e}s d'un tore maximal rationnel $\widehat\T \subset \widehat G$ et d'un {\'e}l{\'e}ment $\sigma \in \widehat\T
^{\Fr}$. De tels couples en dualit{\'e} sont dits \emph{maximalement d{\'e}ploy{\'e}s} si le rang rationnel de~$\widehat\T$
est maximal parmi tous les tores maximaux rationnels de~$\widehat G$ qui contiennent~$\sigma$~\cite{DL}, \S 5.

L'application~$V$ est alors d{\'e}finie par le diagramme suivant~\cite{He}, \S 6.4 (utilisant que $Z(G) \cong \G_m$ est
connexe). Ici l'action de ${\widehat G}^{\Fr}$, resp.\ de ${G}^{\Fr}$, est donn{\'e}e par conjugaison. Les bijections
verticale {\`a} gauche et horizontale en bas d{\'e}pendent du choix d'un g{\'e}n{\'e}rateur du groupe d'inertie
mod{\'e}r{\'e} $I_p^\mathrm{mod}$, mais l'application~$V$ est ind{\'e}pendante de ce choix. La repr{\'e}sentation
$R_\T^\theta$ associ{\'e}e {\`a} un couple $(\T,\theta)$ est celle d{\'e}finie par Deligne--Lusztig~\cite{DL}.
\[ \xymatrix{ 
    {\cong \backslash\bigg\{\begin{array}[h]{@{}c@{}}
        \text{$\tau : I_p \to \widehat G(\fpb)$ mod{\'e}r{\'e}es} \\
        \text{qui s'{\'e}tendent {\`a} $D_p$}
      \end{array} \bigg\} } \ar@{^{(}-->}[r]^V \ar@{<->}[d] &
    {\bigg\{\begin{array}[h]{@{}c@{}}
        \text{repr{\'e}sentations virtuelles} \\
        \text{de $G^{\Fr}$ sur $\qpb$}
      \end{array} \bigg\} / \cong}  \ar@{<-_{)}}[d]^{R_\T^\theta} \\
    {{\widehat G}^{\Fr} \backslash \bigg\{\begin{array}[h]{@{}c@{}}
        \text{couples $(\widehat\T,\sigma)$, $\sigma \in \widehat\T^{\Fr}$} \\
        \text{max.\ deploy{\'e}s}
      \end{array} \bigg\}} \ar@{<->}[r]^{\text{dualit{\'e}\quad\ \ }} & {\bigg\{\begin{array}[h]{@{}c@{}}
        \text{couples $(\T,\theta)$, $\theta : \T^{\Fr} \to \qpb^\times$} \\
        \text{max.\ deploy{\'e}s}
      \end{array} \bigg\} / {G}^{\Fr}} } \]
  Remarquons que $(-1)^{3-\rg_{\fp}\T} V(\tau)$ est toujours une repr{\'e}sentation effective~\cite{DL}, 10.10.
  Les classes de $G^{\Fr}$-conjugaison de tores maximaux rationnels sont param\'etr\'ees par les classes de conjugaison de~$W$ (voir par exemple
  \cite{DL}, 1.14 ou \cite{He}, \S4.1).  En utilisant la preuve de~\cite{DL}, 10.10, on peut v{\'e}rifier que $(-1)^{3-\rg_{\fp}\T} V(\tau)$ est
  l'induite d'un caract{\`e}re de~$B(\fp)$, resp.\ l'induite parabolique d'une repr{\'e}sentation cuspidale de~$M(\fp)$,
  resp.\ l'induite parabolique d'une repr{\'e}sentation cuspidale de~$M_1(\fp)$, resp.\ une repr{\'e}sentation cuspidale
  de~$G(\fp)$ si $\T$ est un tore de type $\{1\}$, resp.\ $\{s_0, s_1s_0s_1\}$, resp.\ $\{s_1,s_0s_1s_0\}$, resp.\
  $\{s_0s_1,s_1s_0\}$ ou $\{w_0\}$.

Pour $\rhob$  mod{\'e}r{\'e}ment ramifi{\'e}e en~$p$ posons $$\cW^?(\rhob|_{I_p}) = \RR(\JH(\overline{V(\rhob|_{I_p})})).$$

\begin{conj}\label{conj:serreweights}
 Soit $\rhob : \Gamma \to \GSp_4(\fpb)$ une repr{\'e}sentation irr{\'e}ductible et motiviquement impaire. Alors,
 
 (i) l'ensemble $\cW(\rhob)$ des poids de Serre de $\rhob$ est non-vide et
son sous-ensemble $\cW\reg(\rhob)$ des poids r{\'e}guliers est contenu dans $\cW^?(\rhob|^{ss}_{I_p})$,

(ii) si de plus $\rhob$ est mod\'er\'ement ramifi\'ee en~$p$, on a
  \[ \cW\reg(\rhob) = \cW^?(\rhob|_{I_p}). \]
\end{conj}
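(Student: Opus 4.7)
Le plan consiste \`a d\'emontrer s\'epar\'ement la non-vacuit\'e de $\cW(\rhob)$, l'inclusion des poids r\'eguliers dans $\cW^?(\rhob|^{ss}_{I_p})$ qui constitue le centre de la partie (i), puis l'inclusion r\'eciproque dans le cas mod\'er\'ement ramifi\'e de la partie (ii). La premi\`ere \'etape rel\`eve d'une modularit\'e automorphe classique suivie d'une d\'ecomposition en constituants de Jordan--H\"older; la deuxi\`eme est un th\'eor\`eme d'\'elimination de poids; la troisi\`eme proc\`ede par construction de formes compagnons via le complexe BGG dual.

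Pour la non-vacuit\'e, on invoquerait la conjecture~\ref{conj:classical} (ou, plus g\'en\'eralement, un th\'eor\`eme de modularit\'e automorphe classique) qui fournit une forme cuspidale $f$ cohomologique de poids $(k,\ell)$ telle que $\overline{\rho}_{f,p}\cong\rhob$. Le syst\`eme de valeurs propres de Hecke de $\rhob^\vee$ appara\^it alors dans $H^3_{et}(X_{\overline{\Q}}, V_\lambda(\overline{\F}_p))$ pour $\lambda=(a,b;a+b)$. En d\'ecomposant $V_\lambda(\overline{\F}_p)$ en composantes de Jordan--H\"older comme $G(\F_p)$-module, on obtiendrait qu'au moins un constituant $F(\mu)$ appartient \`a $\cW(\rhob)$.

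Pour l'\'elimination de poids $\cW\reg(\rhob) \subseteq \cW^?(\rhob|^{ss}_{I_p})$, soit $F(\lambda) \in \cW\reg(\rhob)$. La compatibilit\'e locale-globale au nombre premier~$p$, combin\'ee \`a la th\'eorie de Fontaine--Laffaille et aux th\'eor\`emes de comparaison \'etale--cristalline, forcerait $\rhob|_{D_p}$ \`a admettre un rel\`evement cristallin dont les poids de Hodge--Tate sont prescrits par $\lambda+\rhot$ modulo l'action du groupe de Weyl. La dualit\'e de Deligne--Lusztig~\cite{DL}, qui d\'efinit l'op\'erateur $V$, traduit cet \'enonc\'e en un \'enonc\'e sur les constituants de Jordan--H\"older de $\overline{V(\rhob|_{I_p}^{ss})}$, donnant pr\'ecis\'ement $F(\lambda) \in \RR(JH(\overline{V(\rhob|_{I_p}^{ss})}))$. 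Cette \'etape requiert une analyse d\'etaill\'ee de la r\'eduction modulo~$p$ des repr\'esentations $R_{\T}^{\theta}$ selon la classe de conjugaison $G^{\Fr}$ du tore $\T$, et du comportement des images essentielles sous l'op\'erateur $\RR$.

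L'inclusion r\'eciproque $\cW^?(\rhob|_{I_p}) \subseteq \cW\reg(\rhob)$ dans le cas mod\'er\'e se d\'emontrerait par construction de formes compagnons, suivant les sections~\ref{sec:twists} et~\ref{sec:bgg}. Partant d'un poids $F(\lambda_0) \in \cW(\rhob)$ fourni par l'\'etape pr\'ec\'edente, on produit les poids compagnons en translatant $\lambda_0$ par les repr\'esentants de Kostant $w \in W^M$ et par des twists cyclotomiques; le complexe BGG dual fournit des classes explicites de cohomologie de de Rham r\'ealisant ces poids, qu'on transporte en classes \'etales soit par comparaison dans l'intervalle de Fontaine--Laffaille, soit via un rel\`evement cristallin (proposition~\ref{prop:cryst_lifts}) puis r\'eduction modulo~$p$. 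L'obstacle principal sera la construction effective des formes compagnons non-holomorphes associ\'ees aux deux derniers repr\'esentants de Kostant dans $W^M$: le second auteur a trait\'e le premier cas non-trivial par la m\'ethode BGG, mais les deux autres pr\'esentent des difficult\'es techniques suppl\'ementaires. Un obstacle suppl\'ementaire est la torsion \'eventuelle dans la cohomologie enti\`ere localis\'ee en l'id\'eal maximal de Hecke associ\'e \`a $\rhob$, qui peut emp\^echer une correspondance exacte entre formes compagnons et poids de Serre, et expliquer pourquoi (ii) exige l'hypoth\`ese de ramification mod\'er\'ee.
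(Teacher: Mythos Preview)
The statement you are attempting to prove is a \emph{conjecture} in the paper, not a theorem; the paper offers no proof of it, only supporting evidence (prop.~\ref{prop:conj_combi}, cor.~\ref{cor:conj_combi}, prop.~\ref{prop:cryst_lifts}, and the partial companion-form result of section~\ref{sec:bgg}). Your proposal is therefore not to be compared against a proof in the paper but assessed on its own terms, and on those terms it is not a proof but a programme with gaps you yourself identify.

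Concretely: your first step invokes conjecture~\ref{conj:classical} to obtain non-emptiness of $\cW(\rhob)$, but that statement is itself an open conjecture in the paper, so this is circular as a proof. Your second step (weight elimination) asserts that local-global compatibility and Fontaine--Laffaille theory force $\rhob|_{D_p}$ to have crystalline lifts with prescribed Hodge--Tate weights and that this translates via Deligne--Lusztig duality into membership in $\cW^?$; but no such elimination theorem is available for $GSp_4$, and the paper gives only the converse direction in the totally split case (prop.~\ref{prop:cryst_lifts}). Your third step concedes that the companion-form construction is complete only in Case~1 of section~\ref{sec:twists}, that Cases~2 and~3 present unresolved technical difficulties, that the four primed cases have no proposed construction at all, and that the twelve remaining weights (those in the rows $C_j\to C_i$ of cor.~\ref{cor:conj_combi}) are not addressed by the BGG method. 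Finally, even where a companion form exists, you note correctly that torsion in cohomology and the reducibility of $V_{\lambda'}(\overline{\F}_p)$ obstruct the passage from the existence of a classical form to the Serre weight $F(\lambda')$ itself. What you have written is a fair summary of the obstacles to proving the conjecture, not a proof of it.
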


\begin{rk}
  La d{\'e}termination des poids de Serre irr{\'e}guliers semble pour le moment hors d'atteinte. On s'attend n\'eanmoins \`a ce que dans le cas o\`u $\rhob|_{I_p}$
  est mod\'er\'ee et g\'en\'erique, $\cW(\rhob)$ ne contienne aucun poids irr\'egulier.
\end{rk}

\begin{rk}
  L'inclusion pr\'edite dans la partie (i) de la conjecture est analogue \`a la propri\'et\'e de $W_{\mathfrak{p}}(\rho)$ dans
  la conjecture de Buzzard--Diamond--Jarvis pour les repr\'esentations de degr\'e deux d'un corps totalement r\'eel \cite{BDJ}, \S3.2.
\end{rk}

\begin{rk}   Cette conjecture ne n{\'e}cessite pas la condition de $p$-ordinarit{\'e} de $\rhob|_{I_p}$. Cependant dans la d{\'e}termination explicite qui
  suit de l'ensemble
  $\cW^?(\rhob|_{I_p})$, on se limitera au cas $p$-ordinaire. \emph{D{\'e}sormais, on suppose donc que $\rhob$ est $p$-ordinaire.}
\end{rk}

Pour cette d{\'e}termination, la notion d'alc{\^o}ve~\cite{J1}, II.6 sera essentielle. On note $C_i$ ($0\le i \le 3$) l'ensemble
des $(x,y;z) - \rhot \in X(T) \otimes \R$ tel que, respectivement,
\begin{align*}
  C_0 :\ &x > y > 0,\ x+y < p, \\
  C_1 :\ &x+y > p,\ y < x < p, \\
  C_2 :\ &x-y < p < x,\ x+y < 2p, \\
  C_3 :\ &y < p,\ x+y > 2p,\ x-y < p.
\end{align*}
Ce sont les seules alc{\^o}ves pertinentes ici car $X_1(T) = X(T)_+ \cap \bigcup_{i=0}^3 \overline C_i$.

Si $\mu \in X(T) \cong Y(\widehat T)$, on notera $\bar\mu \in Y(\GSp_4)$ le copoids qui correspond \`a $\mu$ par l'isomorphisme $\spin$.
Rappelons que si $\mu = (a,b;c) \in X(T)$, alors $\bar\mu$ est donn{\'e} par
\begin{equation}\label{eq:11}
  \bar\mu : t \mapsto
  \left(\begin{smallmatrix}
    t^{(a+b+c)/2} \\ & t^{(a-b+c)/2} \\ && t^{(-a+b+c)/2} \\ &&&t^{(-a-b+c)/2}
  \end{smallmatrix}\right) \in Y(\GSp_4).
\end{equation}

Pour $\mu \in X(T)$ d{\'e}finissons alors la repr{\'e}sentation mod{\'e}r{\'e}e du groupe d'inertie,
\begin{equation*}
  \tau(1,\mu) = \bar\mu \circ \omega : I_p \to \GSp_4(\fpb),
\end{equation*}
o\`u le \og 1\fg\ signifie qu'il s'agit d'une somme directe des puissances du caract\`ere fondamental de niveau un.

Soit $\rhob : \Gamma \to \GSp_4(\fpb)$ une repr{\'e}sentation $p$-ordinaire et mod{\'e}r{\'e}e, il est facile de voir qu'il existe $\mu \in X(T)$ tel que
$\rhob|_{I_p} \cong \tau(1,\mu)$.

Pour la d{\'e}finition de l'ordre partiel $\uparrow$ sur $X(T)$, qui est plus grossier que $\le$, on renvoie {\`a} \cite{J1}, II.6. 
Mais dans tous cet article il suffit de savoir que la restriction de $\uparrow$ aux poids $\cup \overline{C_i} \cap X(T)$
est l'ordre partiel minimal satisfaisant la condition suivante. Pour chaque $\lambda \in \overline{C_i} \cap X(T)$ ($i = 0$, 1, 2) et $r_i$ la reflexion affine
par rapport au mur entre les alc\^oves $C_i$ et $C_{i+1}$, on a $\lambda \uparrow r_i(\lambda)$. En particulier, pour n'importe quels $0 \le i \le j \le 3$
et pour chaque $\lambda \in \overline{C_i} \cap X(T)$
il existe un unique poids $\lambda' \in \overline{C_j} \cap X(T)$ tel que $\lambda \uparrow \lambda'$.

\begin{figure}[t]
  \centering
  \input{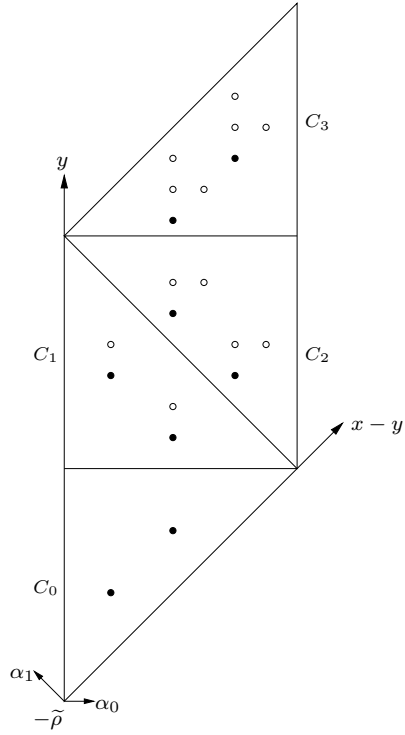}
  \caption{Les vingt poids pr\'edits (situation g\'en\'erique)}\label{fig:20poids}
\end{figure}

\begin{pro}\label{prop:conj_combi} Soit $\tau = \tau(1,\mu)$ une repr{\'e}sentation de $I_p$ comme ci-dessus.
  Alors
  \begin{itemize}
  \item on a $V(\tau) \cong \Ind_{B(\fp)}^{G(\fp)} (\tilde\mu|_{T(\fp)})$ o{\`u} $\tilde\mu : T(\fp) \to \qpb^\times$ est le rel{\`e}vement de Teichm{\"u}ller de
  $\mu : T(\fp) \to \fp^\times$,
\item et l'ensemble $\cW^?(\tau)$ est donn{\'e} par
  \begin{multline}\label{eq:prop}
    \cW^?(\tau) = \{ F(\nu) : \nu \in X\reg(T), \\ \text{$\exists \nu' \uparrow \nu$, $\nu' + \rhot$ dominant et $\tau \cong \tau(1,\nu'+\rhot)$} \}.
  \end{multline}
  \end{itemize}
\end{pro}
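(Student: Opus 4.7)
I treat the two assertions separately.

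\emph{First assertion.} Since $\tau = \tau(1,\mu) = \bar\mu \circ \omega$ factors through the split maximal torus $\widehat T \subset \widehat G$, I may take the corresponding pair in the lower-left corner of the diagram defining $V$ to be $(\widehat T, \sigma)$, where $\sigma = \bar\mu(\zeta) \in \widehat T^{\Fr}$ for a chosen generator $\zeta$ of $I_p^{\mathrm{mod}}$. The torus $\widehat T$ is split of rational rank $3$, hence maximally d\'eploy\'e. Tracing through the duality at the bottom of the diagram, using that $\spin$ preserves the split maximal tori, one obtains the dual pair $(T,\theta)$ with $T$ split and $\theta$ the Teichm\"uller lift of $\mu|_{T(\fp)}$. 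For a split torus the Deligne--Lusztig virtual representation coincides with the honest parabolic induction $R_T^\theta = \Ind_{B(\fp)}^{G(\fp)}\tilde\theta$~\cite[\S10]{DL}, and the sign $(-1)^{3-\rg_{\fp}T}$ equals~$+1$, whence the formula.

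\emph{Second assertion.} By definition $\cW^?(\tau) = \RR(JH(\overline{V(\tau)}))$, so it suffices to describe the Jordan--H\"older constituents of the mod-$p$ reduction of $\Ind_{B(\fp)}^{G(\fp)}\tilde\mu|_{T(\fp)}$ and then apply $\RR$. I invoke the classical analysis of the principal series of $G(\fp)$ via the linkage principle and alcove geometry (\cite[II.6]{J1}, following the method used in~\cite{He} for $GL_n$): a simple module $F(\lambda)$, $\lambda \in X_1(T)$, occurs in the reduction if and only if there exists $\lambda^\circ \in X(T)_+$ in some $\overline{C_i}$, $0 \le i \le 3$, with $\lambda^\circ \uparrow \lambda$ and $\tilde{\lambda^\circ}|_{T(\fp)} \equiv \tilde\mu|_{T(\fp)}$ up to $W_G$-conjugacy. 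In the generic situation this produces the twenty weights depicted in Figure~\ref{fig:20poids}.

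\emph{Matching via $\RR$.} Setting $\nu = w_0 \cbull (\lambda - p\rhot)$ (regularis\'e) and $\nu' = w_0 \cbull (\lambda^\circ - p\rhot)$, the relation $\lambda^\circ \uparrow \lambda$ transfers to $\nu' \uparrow \nu$ because the dot action by $w_0$ interchanges the alc\^oves $C_i$ while the translation by $-p\rhot$ is an element of the affine Weyl group; the dominance of $\lambda^\circ + \rhot$ transfers to dominance of $\nu' + \rhot = w_0(\lambda^\circ + \rhot) - p\rhot$. The equivalence of characters of $T(\fp)$ reduces to $\lambda^\circ \equiv \mu \pmod{(p-1)X(T)}$ modulo $W_G$, which is exactly the condition $\tau(1,\nu'+\rhot) \cong \tau$, because two representations $\tau(1,\alpha)$ and $\tau(1,\beta)$ are $\widehat G(\fpb)$-conjugate iff the coweights $\bar\alpha$, $\bar\beta$ agree modulo $(p-1)Y(\widehat T)$ up to $W_G$. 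The regularity $\nu \in X\reg(T)$ is built into the definition of $\RR$.

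\emph{Main obstacle.} The crux is the second step: pinning down the precise JH content of $\overline{\Ind_B^G \tilde\mu}$ for $GSp_4$, including the multiplicities, and checking that it recovers exactly the twenty weights of Figure~\ref{fig:20poids} in the generic situation. The generic alc\^ove combinatorics are straightforward, but care is required at the boundaries where constituents can merge, as well as for those JH factors whose image under $\RR$ fails the regularity test and therefore contribute nothing to $\cW^?(\tau)$. The transport through $\RR$ and the identification with $\tau$-isomorphism are then essentially combinatorial.
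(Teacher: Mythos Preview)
Your treatment of the first assertion is fine and matches the paper.

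For the second assertion there is a genuine gap. You assert as a known fact that a simple $F(\lambda)$ occurs in $\overline{\Ind_{B(\fp)}^{G(\fp)}\tilde\mu}$ if and only if there is a $\lambda^\circ$ in some $\overline{C_i}$ with $\lambda^\circ \uparrow \lambda$ and $\lambda^\circ \equiv \mu$ in $T(\fp)$ up to $W_G$. No such clean ``if and only if'' is available in the literature for $GSp_4(\fp)$; it is essentially the content of the proposition itself. The paper does not invoke such a statement. Instead it applies Jantzen's explicit formula for the mod~$p$ reduction of a Deligne--Lusztig character \cite[4.8]{J2}, which expresses $\overline{V(\tau)}$ as a \emph{signed} sum of Weyl modules $W(\lambda)$ (twelve terms here, eight principal and four ``correction'' terms $W_{i,X'}$ with shifted arguments). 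These Weyl modules are then decomposed into simples using the explicit structure of $W(\lambda)$ for $Sp_4$ in each alc\^ove~\cite{J0}. Only after this does one match with the right-hand side of~\eqref{eq:prop}, by checking that each of the eight candidate $\nu'_{i,X}$ ``explains'' a specific combination of the $W_{j,Y}$'s.

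Your transport-through-$\RR$ step is also not right as written. You compute $\nu'+\rhot = w_0(\lambda^\circ+\rhot)-p\rhot$; the correct expression from $\nu' = w_0\cbull(\lambda^\circ - p\rhot)$ is $\nu'+\rhot = w_0(\lambda^\circ+\rhot-p\rhot)$, and since $w_0(a,b;c)=(-a,-b;c)$ one gets $(2(p-1)-x,\ (p-1)-y;\ *)$ rather than something involving $w_0(\lambda^\circ+\rhot)$ alone. More importantly, the paper shows that this transport is \emph{not} a formal bijection: when some $\nu'_{i,X}$ lands on a wall, the corresponding set $\{F(\nu):\nu'\uparrow\nu,\ \nu\in X\reg(T)\}$ is strictly smaller than $\RR$ applied to the relevant Weyl modules, and the missing weights must be picked up from a \emph{different} $\nu'_{j,Y}$. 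The paper handles these degenerations ($x=y$, $y=0$, $x=y=0$, various wall conditions) one by one, and in one case a negative Weyl-module term $W_{0,B'}$ genuinely cancels a constituent. Your proposal flags the boundary cases as needing ``care'' but gives no mechanism; that mechanism is precisely the content of the paper's proof.
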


\begin{rk}
  Dans la figure~\ref{fig:20poids}, on a indiqu\'e les plus hauts poids $\nu \in X(T)$ des poids de Serre $F(\nu)$ pr\'edits
  pour $\tau$ g\'en\'erique (comparer avec cor.~\ref{cor:conj_combi}). Les points noirs correspondent aux cas
  $\nu = \nu'$ dans~(\ref{eq:prop}), et les points blancs aux autres cas.
\end{rk}

\begin{rk}
L'analogue de ce r\'esultat est valable pour n'importe quelle repr\'esentation $\tau$ mod\'er\'ee et g\'en\'erique qui admet un prolongement \`a $D_p$
(voir~\cite{He}, prop.~6.28).
\end{rk}

\begin{proof}
Le couple $(\T,\theta)$ associ{\'e} {\`a} $\tau$ est $(T,\tilde\mu|_{T(\fp)})$ par d{\'e}finition \cite{He}, \S 6.4. Comme $T \subset B$ on obtient
$V(\tau) \cong \Ind_{B(\fp)}^{G(\fp)} (\tilde\mu|_{T(\fp)})$ \cite{DL}, 8.2.

On va utiliser la formule de Jantzen pour calculer $\JH(\overline{V(\tau)})$ \cite{He}, th.~3.4 dans l'appendice (voir aussi \cite{He}, \S
5.1). Par un calcul {\'e}l{\'e}mentaire on d{\'e}termine d'abord les constantes $\gamma'_{w_1,w_2} \in \Z[X(T)]^{W_G}$ qui
interviennent dans la formule et on v{\'e}rifie qu'elles sont contenues dans $X^0(T)$ si $w_1 = w_2$ ou $(w_1,w_2) \in
\{(\pi,s_1s_0\pi), (s_1s_0s_1\pi,s_1\pi) : \pi \in \langle s_0s_1s_0 \rangle \}$ et sont nulles dans les autres cas.
Rappelons que la d{\'e}finition de $W(\lambda)$, consid{\'e}r{\'e} comme {\'e}l{\'e}ment du groupe de Grothendieck des
repr{\'e}sentations de $G$, s'{\'e}tend {\`a} tout $\lambda \in X(T)$ (pas n{\'e}cessairement dominant) \cite{J1}, II.5.7. Si
$\mu = (x,y;z)$, on obtient alors que $\overline{V(\tau)}$ est {\'e}gale {\`a}
\newcommand{\sss}{\scriptscriptstyle}
\begin{alignat*}{2}
  &W(2(p-1)-x,p-1-y;z+p-1) &&+ W(x+p-1,p-1-y;z) \\[-5pt]
  & \hspace{1.5cm} \sss = W_{3,A} && \hspace{1.5cm} \sss = W_{3,B} \\
  &+ W(y+p-1,x;z+p-1)      &&+W(y+p-1,p-1-x;z) \\[-5pt]
  & \hspace{1.5cm} \sss = W_{2,A} && \hspace{1.5cm} \sss = W_{2,B} \\
  &+ W(p-1-y,x;z+p-1)      &&+W(p-1-y,p-1-x;z) \\[-5pt]
  & \hspace{1.5cm} \sss = W_{1,A} && \hspace{1.5cm} \sss = W_{1,B} \\
  &+ W(p-1-x,y;z+p-1)      &&+W(x,y;z) \\[-5pt]
  & \hspace{1.5cm} \sss = W_{0,A} && \hspace{1.5cm} \sss = W_{0,B} \\
  &+ W(p-2-y,x-1;z+p-1)    &&+ W(p-2-y,p-2-x;z) \\[-5pt]
  & \hspace{1.5cm} \sss = W_{1,A'} && \hspace{1.5cm} \sss = W_{1,B'} \\
  &+ W(p-3-x,y;z+p-1)      &&+ W(x-2,y;z) \\[-5pt]
  & \hspace{1.5cm} \sss = W_{0,A'} && \hspace{1.5cm} \sss = W_{0,B'}
\end{alignat*}
dans le groupe de Grothendieck des repr{\'e}sentations de $G$ sur $\fpb$ (on a donn{\'e} un nom {\`a} chaque terme pour simplifier les
notations dans ce qui suit).

Pour d{\'e}composer ces modules de Weyl on utilise les formules suivantes. Puisque
\begin{equation}
  \label{eq:2}
  W(w\cbull \lambda) = \sgn(w) W(\lambda) \quad \forall w\in W_G
\end{equation}
\cite{J1}, II.5.9(1) on peut se ramener d'abord au cas o{\`u} $\lambda$ est dominant et en fait $p$-restreint. Pour un tel poids $\lambda \in X_1(T)$, on a
\begin{alignat}{2}
\label{eq:1}  W(\lambda) &= F(\lambda) + F(s_1s_0s_1 \cbull \lambda + p(2,2;0)) && \quad\text{si $\lambda \in C_3$}, \\
W(\lambda) &= F(\lambda) + F(s_0s_1s_0 \cbull \lambda + p(2,0;0)) && \quad\text{si $\lambda \in C_2$}, \\
W(\lambda) &= F(\lambda) + F(s_1s_0s_1 \cbull \lambda + p(1,1;0)) && \quad\text{si $\lambda \in C_1$}, \\
\label{eq:10} W(\lambda) &= F(\lambda) && \quad\text{si $\lambda \in C_0$}.
\end{alignat}
Remarquons que si $\lambda \in C_i$ avec $1 \le i \le 3$, les composants de $W(\lambda)$ sont dans $C_i$ et $C_{i-1}$.
Si $\lambda \in X_1(T) \setminus \cup_{i=0}^3 C_i$, $W(\lambda)$ est irr{\'e}ductible sauf si $\langle \lambda+\rhot,\alpha_0\dual \rangle =
p$ et $p/2 < \langle \lambda+\rhot,\alpha_1\dual \rangle < p$, ou $p = 2$ et $\lambda$ est de la forme $(1,1;*)$. Dans ces cas
exceptionnels $W(\lambda)$ se d{\'e}compose selon~\eqref{eq:1} \cite{J0}, \S 7.
 (Bien que les r{\'e}sultats soient enonc{\'e}s pour $\Sp_4$ dans cet
article de Jantzen, ils sont encore valables pour $\GSp_4$ parce que son groupe d{\'e}riv{\'e} est $\Sp_4$.)

Nous allons maintenant {\'e}tudier le membre de droite de l'{\'e}galit{\'e}~\eqref{eq:prop}. En utilisant l'{\'e}quivalence
\begin{equation}
  \label{eq:4}
  \tau(1,\mu) \cong \tau(1,\mu') \Longleftrightarrow \mu' \in W_G\,\mu + (p-1)X(T), 
\end{equation}
nous pouvons supposer dor{\'e}navant que
\begin{equation}
  \label{eq:3}
  x-y \ge 0,\quad y \ge 0,\quad x \le (p-1)/2.
\end{equation}
Observons que pour $G = \GSp_4$, pour tout poids $\nu$ $p$-r{\'e}gulier et pour tout poids $\nu' \in X(T)$ tel que $\nu' + \rhot$ soit dominant et $\nu'
\uparrow \nu$, on a 
\[ 0 \le \langle \nu'+\rhot, \alpha_i\dual \rangle < p \quad \forall i. \]
(Cette propri{\'e}t{\'e} est particuli{\`e}re {\`a} la g{\'e}ometrie des alc{\^o}ves de $\GSp_4$.) Il r{\'e}sulte donc facilement de~\eqref{eq:4} que les $\nu'$ dans le membre de droite
de~\eqref{eq:prop} sont pr{\'e}cis{\'e}ment\label{eq:nu_prime}
\begin{align*}
  \nu'_{0,A} &= (x,y;z)-\rhot, & \nu'_{0,B} &= (p-1-x,y;z+p-1)-\rhot, \\
  \nu'_{1,A} &= (p-1-y,p-1-x;z)-\rhot, & \nu'_{1,B} &= (p-1-y,x;z+p-1)-\rhot, \\
  \nu'_{2,A} &= (y+p-1,p-1-x;z)-\rhot, & \nu'_{2,B} &= (y+p-1,x;z+p-1)-\rhot, \\
  \nu'_{3,A} &= (x+p-1,p-1-y;z)-\rhot, & \nu'_{3,B} &= (2p-2-x,p-1-y;z+p-1)-\rhot,
\end{align*}
{\`a} $(p-1)X^0(T)$ pr{\`e}s. (Cette liste est valable m\^eme si $x = y$ ou $y = 0$.)

Commen\c cons {\`a} comparer les deux membres de l'\'equation~\eqref{eq:prop}.
On dira qu'un $\nu' \in X(T)$ \emph{explique} une repr{\'e}sentation $W$ de $G(\fp)$ sur $\fpb$ si
\begin{equation}
  \label{eq:5}
  \{ F(\nu) : \nu \in X\reg(T),\ \nu' \uparrow \nu \} = \RR(\JH(W)).
\end{equation}
Alors
\begin{alignat}{2}
\label{eq:6}  &\text{$\nu'_{0,X}$ explique $W_{3,X}\oplus W_{1,X'}$} &\quad& \text{si $\nu'_{0,X} \in C_0$,} \\
\label{eq:7}  &\text{$\nu'_{1,X}$ explique $W_{2,X}\oplus W_{0,X'}$} && \text{si $\nu'_{1,X} \in C_1$,} \\
\label{eq:8}  &\text{$\nu'_{2,X}$ explique $W_{1,X}$} && \text{si $\nu'_{2,X} \in C_2$,} \\
\label{eq:9}  &\text{$\nu'_{3,X}$ explique $W_{0,X}$} && \text{si $\nu'_{3,X} \in C_3$,}
\end{alignat}
o{\`u} \og X\fg\  signifie soit \og A\fg\  soit \og B\fg. En particulier, si ces huit conditions sont satisfaites simultan{\'e}ment, la proposition est
prouv{\'e}e.  Dans une premi{\`e}re {\'e}tape on va {\'e}tendre la preuve aux cas o{\`u} $\nu'_{i,X} \in \overline C_i$ pour chaque $0 \le i \le 3$.
Avec cette hypoth{\`e}se, le m{\^e}me raisonnement que pr{\'e}c{\'e}demment s'applique pourvu que $\langle \nu'_{i,X}+\rhot ,\alpha_j\dual \rangle \ne 0$ pour
chaque $i$, $j$.  Si $\langle \nu'_{1,X}+\rhot, \alpha_0\dual \rangle = 0$, l'\'equation \eqref{eq:7} n'est plus valable parce qu'un poids de Serre manque
{\`a} la gauche de l'{\'e}quation~\eqref{eq:5}; mais \c ca ne pose pas un probl{\`e}me car c'est $F(\nu'_{3,Y})$ (o{\`u} \og Y\fg\  est ou \og A\fg\  ou
\og B\fg\  mais pas {\'e}gale {\`a} \og X\fg).  Un argument similaire s'applique si $\langle \nu'_{0,X}+\rhot, \alpha_0\dual \rangle = 0$, auquel cas il manquent en g\'en\'eral deux poids au membre de gauche, ou si $\langle \nu'_{0,X}+\rhot, \alpha_1\dual \rangle = 0$.  (Les poids qui manquent au membre de gauche
de~\eqref{eq:5} dans tous ces cas correspondent aux \og discontinuit{\'e}s\fg\  de $\RR$, i.e., aux poids $\lambda$ o{\`u} $w_0\cbull (\lambda-p\rhot) \not\in X\reg(T)$;
il y a alors toujours un autre $\nu'$ qui explique le poids qui manque parce que $\tau(1,-)$ est constant sur $\nu'+\rhot+(p-1)X(T)$.)

Remarquons ensuite que~\eqref{eq:9} est vraie m{\^e}me si $\nu'_{3,X} \in \overline C_2$ (m{\^e}me argument que pour $\nu'_{2,X}$).

Il reste \`a traiter les cas o{\`u} $x = y$ ou $y = 0$. Si $y = 0$ et $x \ne y$, tout marche comme
$\nu'_{2,X} = \nu'_{1,X} \in \overline C_1$ et $W_{2,X} = W_{1,X}$.

Si $x = y$ et $0 < y < (p-1)/2$, les {\'e}quations~\eqref{eq:6}--\eqref{eq:9}, sauf~\eqref{eq:7} quand $X = B$, sont satisfaites.
Comme $\nu'_{1,B} = \nu'_{0,B} \in C_0$ et $W_{2,B} = W_{3,B}$ il suffit de montrer que $W_{0,B'}$ ne contribue pas {\`a} $\RR(\JH(\overline{V(\rhob|_{I_p})}))$,
c'est-{\`a}-dire ne change que les multiplicit{\'e}s des composants. Soit $F_{1,B'}$ (resp., $F_{3,A}$) le composant de $W_{1,B'}$ (resp., $W_{3,A}$) avec
plus haut poids dans $\overline C_0$ (resp.\  $\overline C_2$). Alors, en effet, $W_{0,B'} = -F_{1,B'}$ et $\RR(F_{1,B'}) = \RR(F_{3,A})$.
Si $x = y$ et $y = (p-1)/2$, on traite en plus le cas~\eqref{eq:7} pour $X = A$ de la m{\^e}me mani{\`e}re.

Supposons finalement que $x = y = 0$. Observons que $\nu'_{2,A} = \nu'_{3,A} = \nu'_{1,A} \in C_1$, $W_{1,A} = W_{0,A} = W_{2,A}$ et
$\nu'_{2,B} = \nu'_{1,B} = \nu'_{0,B} \in C_0$, $W_{1,B} = W_{2,B} = W_{3,B}$. Les {\'e}quations~\eqref{eq:6}, \eqref{eq:7} si $X = A$
et~\eqref{eq:9} si $X = B$ sont satisfaites. Il ne reste qu'{\`a} constater que $W_{1,B'} = 0$. On v{\'e}rifie ais{\'e}ment que l'argument de ce paragraphe est encore valable si $p = 2$ (et c'est l'unique cas qui peut arriver si $p = 2$).
\end{proof}

\begin{lem}\label{lm:twisting} On a
  \[ \cW^?(\tau \otimes \omega^c) \cong \cW^?(\tau) \otimes \nu_\simi^c \qquad \forall c \in \Z, \]
o\`u $\nu_\simi : G \to \G_m$ est
le facteur de similitudes de $G$. De m\^eme,
 \[ \cW(\rhob \otimes \omega^c) \cong \cW(\rhob) \otimes \nu_\simi^c \qquad \forall c \in \Z.  \]
\end{lem}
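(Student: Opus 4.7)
Le plan est de suivre chaque \'etape de la d\'efinition de $\cW^?(\tau)$ et de $\cW(\rhob)$ et de v\'erifier que le twist par $\omega^c$ (c\^ot\'e galoisien) se transforme en celui par $\nu^c$ (c\^ot\'e des poids de Serre). L'observation cl\'e est que $\nu^c$, vu comme repr\'esentation de $G$, est de plus haut poids $(0,0;2c) \in X^0(T)$, donc central, fix\'e par $W_G$ et en particulier par $w_0$; de plus, la restriction $\nu^c|_{T(\F_p)}$ est pr\'ecis\'ement l'image de l'\'el\'ement central correspondant par la dualit\'e utilis\'ee pour d\'efinir $V$.

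Pour la premi\`ere assertion, on d\'ecompose $\cW^?(\tau) = \RR(JH(\overline{V(\tau)}))$ et on v\'erifie la compatibilit\'e de chaque op\'eration avec le twist. Tout d'abord, si $\tau$ correspond par dualit\'e \`a $(\widehat\T,\sigma)$ (avec $\widehat\T$ maximalement d\'eploy\'e), alors $\tau \otimes \omega^c$ correspond \`a $(\widehat\T, \sigma\cdot z_c)$ o\`u $z_c \in Z(\widehat G)^{\Fr}$ est l'\'el\'ement central associ\'e \`a $\omega^c$ (multiplier par un \'el\'ement central ne modifie pas $\widehat\T$, ni son rang rationnel). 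Du c\^ot\'e de $G$, ceci se traduit en rempla\c cant $\theta$ par $\theta \cdot \nu^c|_{\T(\F_p)}$ sans modifier $\T$. On utilise alors la propri\'et\'e classique des repr\'esentations de Deligne--Lusztig $R_\T^{\theta\cdot\chi|_\T} \cong R_\T^\theta \otimes \chi$ pour $\chi$ une repr\'esentation de dimension $1$ de $G^{\Fr}$ pour conclure $V(\tau \otimes \omega^c) \cong V(\tau) \otimes \nu^c$. Le passage \`a $\overline{(-)}$ et \`a $JH$ est imm\'ediat puisque tensoriser par $\nu^c$, de dimension $1$, pr\'eserve les irreductibles et donne $F(\lambda) \otimes \nu^c \cong F(\lambda + (0,0;2c))$. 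Enfin, $\RR$ commute avec $\otimes \nu^c$: puisque $w_0$ fixe $(0,0;2c)$, on a $w_0 \cbull (\lambda + (0,0;2c) - p\rhot) = w_0 \cbull (\lambda - p\rhot) + (0,0;2c)$, et la r\'egularisation $(-)\reg$ est insensible \`a l'ajout d'un \'el\'ement de $X^0(T)$ (les accouplements avec les $\alpha_i\dual$ y sont nuls).

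Pour la deuxi\`eme assertion, le point crucial est d'identifier $\nu \circ \phi_{\overline x}$ avec une puissance de $\omega$. Le facteur de similitudes de $\phi_{\overline x}: \pi_1(X,\overline x) \to GSp(A_{\overline x}[p]^\vee)$ est d\'etermin\'e par l'accouplement de Weil sur $A[p]^\vee$ et se factorise donc par la projection $\pi_1(X)\to \Gamma$; avec les conventions du texte (on utilise $A[p]^\vee$ plut\^ot que $A[p]$) on obtient $\nu\circ\phi_{\overline x} = \omega^{-1}$. Le syst\`eme local $\nu^c_X$ est alors g\'eom\'etriquement trivial, avec action galoisienne par $\omega^{-c}$. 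Puisque $F(\lambda+(0,0;2c))_X \cong F(\lambda)_X \otimes \nu^c_X$, on en d\'eduit un isomorphisme de $\Gamma$-modules
$$H^\expbull(X\times \overline \Q, F(\lambda+(0,0;2c))_X) \cong H^\expbull(X\times\overline\Q, F(\lambda)_X) \otimes \omega^{-c}.$$
Comme $(\rhob\otimes\omega^c)^\vee = \rhob^\vee \otimes \omega^{-c}$, l'inclusion $(\rhob\otimes\omega^c)^\vee \subset H^\expbull(X\times\overline\Q, F(\lambda+(0,0;2c))_X)$ \'equivaut \`a $\rhob^\vee \subset H^\expbull(X\times\overline\Q, F(\lambda)_X)$, ce qui donne $\cW(\rhob \otimes \omega^c) = \cW(\rhob) \otimes \nu^c$.

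L'\'etape techniquement la plus d\'elicate est la compatibilit\'e de $V$ avec le twist central: il faut pr\'eciser l'image de $Z(\widehat G)$ sous l'isomorphisme $\spin$, v\'erifier que la dualit\'e entre couples maximalement d\'eploy\'es transporte la multiplication par $z_c$ en celle par $\nu^c|_{\T(\F_p)}$, et invoquer la multiplicativit\'e bien connue de $R_\T^\theta$ par les caract\`eres lin\'eaires de $G^{\Fr}$ (cf.\ \cite{DL}).
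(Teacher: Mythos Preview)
Your proposal is correct and follows essentially the same approach as the paper: for the first assertion you identify $\nu$ with the central cocharacter via $\spin$, invoke the compatibility $R_\T^{\theta\cdot\chi|_\T} \cong R_\T^\theta \otimes \chi$ of Deligne--Lusztig induction with linear characters (this is \cite[1.27]{DL} in the paper), and verify that $\RR$ commutes with the central twist; for the second you compute $\nu\circ\phi_{\overline x}=\omega^{-1}$ from the Weil pairing exactly as the paper does. The only cosmetic point is that $V(\tau)$ lives over $\qpb$, so the twist should be written as $\widetilde\nu^c$ (the Teichm\"uller lift) rather than $\nu^c$ when tensoring at that stage.
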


\begin{proof}
  Pour la premi\`ere assertion, on observe que $\nu_\simi|_{T} = (0,0;2) \in X(T) \cong Y(\widehat T)$ correspond par l'isomorphisme $\spin$
  au cocaract\`ere central $\zeta : z\mapsto {\rm diag}(z,z,z,z)$ de $\G_m$ vers $G$ (voir~\eqref{eq:11}).
  Puisque $\nu_\simi$ et $\zeta$ sont invariants par conjugaison et $R^{\theta
    \widetilde \nu_\simi}_\T \cong R^\theta_\T \otimes \widetilde \nu_\simi$ (\cite{DL}, 1.27), on a $V(\tau \otimes \omega^c) \cong
  V(\tau) \otimes \widetilde \nu_\simi^c$.  Finalement, on utilise que $\RR(F \otimes \nu_\simi) \cong \RR(F) \otimes \nu_\simi$.
  
  Pour la deuxi\`eme assertion, rappelons la construction du syst\`eme local $F_X$ associ\'e \`a une repr\'esentation
  $(F,\rho_F)$ de $G(\Z/p\Z)$. Soit $f : A \to X_\Q$ le sch\'ema ab\'elien universel et soit $\phi_{\overline x} :
  \pi_1(X_\Q,\overline{x})\rightarrow \GSp(A_{\overline x}[p]\dual) \cong G(\Z/p\Z)$ la repr\'esentation du groupe fondamental
  de $X\times\Q$ (pour un point g\'eom\'etrique fix\'e), associ\'ee au faisceau \'etale localement constant $R^1 f_* \Z/p$.
  Le syst\`eme local $F_X$ est associ\'e \`a la repr\'esentation $\rho_F\circ\phi_{\overline x}$ du groupe fondamental.  Soit
  $\rho_{\overline x}:\pi_1(X_\Q,\overline{x})\rightarrow \GSp(A_{\overline x}[p]) \cong G(\Z/p\Z)$ la repr\'esentation
  associ\'ee au rev\^etement \'etale $A[p]\rightarrow X_\Q$ des points de $p$-torsion.  Comme la forme symplectique sur
  $A_{\overline x}[p]$ est donn\'ee par l'accouplement de Weil, on a pour tout $P,Q\in A_{\overline x}[p]$: $\langle
  P^\sigma,Q^\sigma\rangle= \langle P,Q\rangle^{\nu_\simi\circ\rho_{\overline x}(\sigma)}=\langle P,Q\rangle^{\omega(\sigma)}$. On
  trouve donc $\nu_\simi\circ\rho_{\overline x}=\omega$ ce qui implique $\nu_\simi \circ \phi_{\overline x} = \omega^{-1}$.  Il s'ensuit
  que
$$H^\expbull_{et}(X\times\overline{\Q},F(\lambda)_X\otimes\nu_{\simi,X}^c)= H^\expbull_{et}(X\times\overline{\Q},F(\lambda)_X)\otimes\omega^{-c}.$$
Le r\'esultat en d\'ecoule.
\end{proof}

La preuve de la proposition~\ref{prop:conj_combi} donne le corollaire suivant, qui fournit une description explicite de
$\cW^?(\tau)$ dans la plupart des cas, \`a torsion pr\`es.

\begin{cor}\label{cor:conj_combi}
Supposons que $\tau : I_p \to \GSp_4(\fpb)$ est donn\'ee par
\[ \tau \sim \left(\begin{array}{cccc}\omega^{k+\ell-3}&0&0&0\\0&\omega^{k-1}&0&0\\0&0&\omega^{\ell-2}&0\\0&0&0&1\end{array}\right)
  \sim \tau(1,(x,y;x+y)), \]
o\`u $k > \ell > 3$, $k+\ell < p+1$ et $x = k-1$, $y = \ell-2$. Alors $\cW^?(\tau)$ consiste
en les 20 poids de Serre $F(\nu)$ avec $\nu+\rhot \in X\reg (T)+\rhot$ parcourant:
\[\begin{array}{l|ll}
C_0 & (x,y;x+y), & (p-1-x,y;x+y+p-1), \\
\hline
C_1 & (p-1-y,p-1-x;x+y), & (p-1-y,x;x+y+p-1), \\
C_0 \to C_1 & (p-y,p-x;x+y), & (p-y,x+1;x+y+p-1), \\
\hline
C_2 & (y+p-1,p-1-x;x+y), & (y+p-1,x;x+y+p-1), \\
C_1 \to C_2 & (y+p+1,p-1-x;x+y), & (y+p+1,x;x+y+p-1), \\ 
C_0 \to C_2 & (y+p,p-x;x+y), & (y+p,x+1;x+y+p-1), \\
\hline
C_3 & (x+p-1,p-1-y;x+y), & (2p-2-x,p-1-y;x+y+p-1), \\
C_2 \to C_3 & (x+p+1,p+1-y;x+y), & (2p-x,p+1-y;x+y+p-1), \\ 
C_1 \to C_3 & (x+p+1,p-1-y;x+y), & (2p-x,p-1-y;x+y+p-1), \\
C_0 \to C_3 & (x+p,p-y;x+y), & (2p-1-x,p-y;x+y+p-1).
\end{array}\]
Ici la notation \og $C_j \to C_i$\fg\ pour $j < i$, resp.\ \og $C_i$\fg, veut dire que les $\nu$ correspondants sont
situ\'es dans l'alc\^ove $C_i$ et qu'ils sont obtenus en commen\c cant par un $\nu' \in C_j$, resp.\ par $\nu' = \nu$, dans
la description de $\cW^?(\tau)$ de la prop.~\ref{prop:conj_combi}. Il y a une seule exception: si $k-\ell = 1$ ou $k + \ell =
p$, un $\nu$ de la rang\'ee \og $C_3$\fg\ se trouve sur la fronti\`ere entre $C_2$ et $C_3$.
\end{cor}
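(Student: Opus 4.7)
The plan is to unpack Proposition~\ref{prop:conj_combi} directly for $\mu = (x, y; x+y)$ with $x = k - 1$ and $y = \ell - 2$. The hypotheses $k > \ell > 3$ and $k + \ell < p + 1$ translate into the strict inequalities $x > y > 0$ and $x + y < p - 1$, so $\mu$ satisfies the normalization~\eqref{eq:3} used in the proof of the proposition and lies strictly inside $C_0 + \rhot$; in particular, the degenerate situations $x = y$, $y = 0$, or $x = (p-1)/2$ treated at the end of that proof are all ruled out here. I would then reproduce the list of the eight candidates $\nu'_{i,X}$ ($i \in \{0,1,2,3\}$, $X \in \{A,B\}$) already written down in the proof of the proposition, and verify by inspection of simple linear inequalities that each $\nu'_{i,X}$ lies in $\overline{C_i}$. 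The only way some $\nu'_{i,X}$ can sit on a wall rather than in the open alcove is exactly the situation recorded in the statement: when $k - \ell = 1$ we have $x - y = 2$, forcing $\nu'_{3,A} + \rhot$ onto the wall $x' + y' = 2p$ between $\overline{C_2}$ and $\overline{C_3}$, and when $k + \ell = p$ the same happens for $\nu'_{3,B} + \rhot$.

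For each $\nu'_{i,X}$ one must then enumerate the $\nu \in X\reg(T)$ with $\nu'_{i,X} \uparrow \nu$. I would use that on $\bigcup_i \overline{C_i} \cap X(T)$ the partial order $\uparrow$ is generated by $\lambda \uparrow r_i(\lambda)$ for $\lambda \in \overline{C_i}$ ($i = 0,1,2$), where $r_i$ is the affine reflection across the wall between $\overline{C_i}$ and $\overline{C_{i+1}}$. For $GSp_4$ this implies that, given $\nu' \in \overline{C_i} \cap X(T)$ with $\nu' + \rhot$ dominant, there is exactly one $\nu \in \overline{C_j} \cap X(T)$ above $\nu'$ for each $i \le j \le 3$, namely $\nu = r_{j-1} \circ \cdots \circ r_i (\nu')$. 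In the coordinates $(x', y'; z) = \nu + \rhot$ these reflections are $r_0 : (x', y') \mapsto (p - y', p - x')$, $r_1 : (x', y') \mapsto (2p - x', y')$, and $r_2 : (x', y') \mapsto (2p - y', 2p - x')$, each fixing the $z$-coordinate. Applying them to the eight $\nu'_{i,X}$ yields exactly $2 \cdot (4 + 3 + 2 + 1) = 20$ points, and a direct computation identifies them with the rows of the table in the statement.

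There is no conceptual obstacle: the real work has been done in Proposition~\ref{prop:conj_combi}, and the corollary is a bookkeeping exercise. The only mildly delicate step is to keep the eight chains organised and to confirm, using the same inequalities as in the first step, that each of the 20 resulting $\nu$ is indeed $p$-régulier and that the two boundary scenarios described in the last sentence are the only ones in which a predicted $\nu$ falls on the wall $x' + y' = 2p$ between $\overline{C_2}$ and $\overline{C_3}$.
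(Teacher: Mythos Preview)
Your approach is exactly what the paper intends: the corollary is stated as a direct consequence of the proof of Proposition~\ref{prop:conj_combi}, and you correctly propose to read off the eight $\nu'_{i,X}$, check their alcove positions under the given inequalities, and then push each one up the $\uparrow$-chain via the affine reflections $r_0$, $r_1$, $r_2$ to produce the twenty entries of the table.

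One small inaccuracy: the hypotheses $k>\ell>3$, $k+\ell<p+1$ do \emph{not} imply the full normalization~\eqref{eq:3}, since $x\le (p-1)/2$ can fail (e.g.\ $p=11$, $k=7$, $\ell=4$). This is harmless, however, because the list of $\nu'_{i,X}$ in the proof of the proposition only really uses $x\ge y\ge 0$ and $x+y\le p-1$; the extra condition $x\le(p-1)/2$ was imposed there only to streamline the degenerate-case analysis, and the formulas for the $\nu'_{i,X}$ are visibly symmetric under $(x,y)\leftrightarrow(p-1-x,y)$ (which swaps the $A$ and $B$ columns). So you should either drop the claim about~\eqref{eq:3} and verify directly that the eight formulas give the complete list of admissible $\nu'$ under your weaker constraints, or invoke this $A\leftrightarrow B$ symmetry. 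Everything else in your plan is sound.
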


\begin{rk}
  Supposons que $\rhob = \rhob_{f,p}$ avec $f$ de poids $(k, \ell)$ et $k > \ell > 3$ et $k+\ell < p+1$ telle que $\rhob|_{I_p}$ est $p$-ordinaire
  mod\'er\'ee. Notons $\lambda = (a,b;a+b) = (x,y;x+y)-\rhot$ le \og poids fondamental\fg. Alors les huit poids $\nu$ des rang\'ees \og $C_i$\fg\ se
  r{\'e}partissent en deux sous-ensembles. Le premier est le sous-ensemble des poids $p$-r\'eguliers parmi $w\cbull\lambda+\Z(p-1,p-1)$ o{\`u} $w\in
  W^M$ parcourt l'ensemble des repr{\'e}sentants de Kostant dans $W_G$, et le second est le sous-ensemble des poids $p$-r\'eguliers parmi $w\cbull
  \lambda+\Z(p-1,0)$ pour $w\in W_G\setminus W^M$. Les poids de Serre correspondants s'appellent les poids compagnons de~$f$ (ou de son poids
  fondamental).

  Remarquons que sous la condition plus faible $k \ge \ell \ge 3$ et $k + \ell - 3 < p-1$, qui provient de la th\'eorie de Fontaine--Laffaille, on a toujours ces
  huit poids compagnons du poids fondamental. Ils sont tous $p$-restreints mais peuvent se situer sur des murs ou sur des alc\^oves 
  contigu\"es \`a
  celles de la liste ci-dessus. On y reviendra dans la prochaine section.
\end{rk}

En ce qui concerne les douze poids dans les rang\'ees \og $C_j \to C_i$\fg, on peut justifier leur pr\'esence dans la liste
par la proposition suivante. On dira qu'une repr\'esentation $\rho_p : D_p \to \GSp_4(\qpb)$ est \emph{cristalline \`a poids
  de Hodge--Tate} $\xi \in Y(T)$ si $\jmath \circ \rho_p$ est cristalline \`a poids de Hodge--Tate $\jmath \circ \xi$
(qui s'identifie \`a un multiensemble de quatre entiers), o\`u $\jmath : \GSp_4 \into \GL_4$ est l'inclusion canonique.

\begin{pro}\label{prop:cryst_lifts}
  Supposons que la repr\'esentation locale $\rhob_p : D_p \to \GSp_4(\fpb)$ est totalement d\'ecompos\'ee.
  Alors pour chaque $\mu \in X\reg(T)+\rhot$ tel que $F(\mu-\rhot) \in \cW^?(\rhob_p\vert_{I_p})$ il existe
  une repr\'esentation cristalline $\rho_p : D_p \to \GSp_4(\qpb)$ \`a poids de Hodge--Tate $\bar\mu \in Y(T)$ qui rel\`eve $\rhob_p$.
\end{pro}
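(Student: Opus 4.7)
The plan is to build $\rho_p$ from crystalline characters that respect the symplectic structure. Since $\rhob_p$ is totalement décomposée, write $\rhob_p = \chi_3\oplus\chi_2\oplus\chi_1\oplus\chi_0$ with $\chi_j : D_p\to\fpb\s$; the symplectic form pairs these as $\chi_0\chi_3 = \chi_1\chi_2 = \nu\circ\rhob_p$. For $\mu=(a,b;c)$, the four Hodge--Tate weights of $\bar\mu$, namely the $(\pm a\pm b+c)/2$, group naturally into two pairs each summing to $c$: the pair $\{(a+b+c)/2,(-a-b+c)/2\}$ and the pair $\{(a-b+c)/2,(-a+b+c)/2\}$, which match the two pairs of characters $\{\chi_0,\chi_3\}$ and $\{\chi_1,\chi_2\}$ under the symplectic form. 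The strategy is therefore to assemble $\rho_p$ as an orthogonal direct sum $V_1\oplus V_2$ of two 2-dimensional symplectic crystalline pieces, one per pair of HT weights and characters.

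For the eight ``direct'' Serre weights (rows $C_0$, $C_1$, $C_2$, $C_3$ of Corollary~\ref{cor:conj_combi}), inspection shows that within each pair the two HT weights reduce modulo $p-1$ to the corresponding pair of inertial exponents $i_j$. Each $V_i$ is then built as a direct sum $\psi_A\oplus\psi_B$ of two crystalline characters, where $\psi_A$ is a Lubin--Tate lift of $\chi_A$ with the prescribed HT weight $n$ (obtained as $\epsilon^n\cdot\nr(u)$ with $u\in\zpb\s$ reducing to the unramified part of $\chi_A\omega^{-i_A}$), and similarly for $\psi_B$. The one remaining unramified parameter in each character is used to impose $\psi_A\psi_B = \epsilon^c\cdot\nr(v)$ for a fixed crystalline lift of $\nu\circ\rhob_p$, which matches the two $V_i$'s together symplectically.

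For the twelve ``non-direct'' Serre weights in the rows $C_j\to C_i$ with $j<i$, one of the two pairs of HT weights fails to match the target inertial exponents modulo $p-1$ (the two weights are shifted by $\pm p$ relative to the direct case through the alcove reflection $r_i$), so for that pair a decomposition into crystalline characters is not available. Instead, one must construct the corresponding 2-dimensional $V_i$ directly as a crystalline representation of $D_p$ with the prescribed HT weights, determinant $\epsilon^c\cdot\nr(v)$, and semi-simple reduction $\chi_A\oplus\chi_B$, by exhibiting explicit rank-2 Wach (equivalently Breuil--Kisin) module data. The main obstacle of the proof lies here: one must produce rank-2 Breuil--Kisin modules whose Hodge weights lie outside the Fontaine--Laffaille range and whose reductions modulo $p$ realize the prescribed pair of characters, so that the resulting $V_i$ fits together with the other 2-dimensional piece into a crystalline symplectic $\rho_p$ with HT cocharacter $\bar\mu$ lifting $\rhob_p$. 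The ``totalement décomposée'' hypothesis is essential precisely at this point: it guarantees that the prescribed reduction of $V_i$ is a direct sum of two mod-$p$ characters (rather than a non-trivial extension), a configuration which can indeed be realized by explicit choice of the Frobenius matrix in a suitable Breuil--Kisin module.
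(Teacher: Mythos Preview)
Your overall strategy—assembling the lift from two 2-dimensional pieces—is close to the paper's, but the fixed assignment ``the pair $\{(a+b+c)/2,(-a-b+c)/2\}$ matches $\{\chi_0,\chi_3\}$'' is too rigid and in one family of cases leads to a genuine obstruction, not merely a bookkeeping issue.

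Take $\mu$ in the row $C_1\to C_2$ (left column of the corollary), so $\mu=(y+p+1,\,p-1-x;\,x+y)$ and $\bar\mu$ has Hodge--Tate multiset $\{y+p,\ x+y+1,\ -1,\ x-p\}$. The only partition into two pairs with equal sums $c=x+y$ is $\{y+p,x-p\}$ and $\{x+y+1,-1\}$, so your orthogonal-symplectic decomposition is forced. For the inner piece you would then need a 2-dimensional crystalline $V_2$ with Hodge--Tate weights $\{x+y+1,-1\}$ and reduction $\omega^x\nr(\beta)\oplus\omega^y\nr(\gamma)$. But the Hodge--Tate gap $x+y+2$ lies in the Fontaine--Laffaille range (recall $x+y<p-2$), so every reducible crystalline representation with those weights has inertial reduction $\omega^{x+y+1}\oplus\omega^{-1}$; the reduction you require simply does not occur. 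Hence your claim that ``one of the two pairs fails'' is wrong here: in the only available symplectic grouping \emph{both} pairs fail, and one of them is provably unrealisable. The same happens for $C_1\to C_3$.

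The paper circumvents this by not insisting on an orthogonal decomposition for every weight. For the rows $C_1\to C_i$ it uses the \emph{Siegel} Levi $M$: a Lagrangian decomposition $\sigma_1\oplus\sigma_2$ with $\sigma_1$ crystalline of Hodge--Tate weights $\{y+p,-1\}$ lifting $\chi_1\oplus\chi_0$, and $\sigma_2$ determined from $\sigma_1$ by the similitude. For the rows $C_0\to C_2$, $C_0\to C_3$, $C_2\to C_3$ the paper stays in the Klingen Levi $M_1$ but with the assignment swapped relative to yours: the 2-dimensional $\sigma$ lifts $\{\chi_1,\chi_2\}$ with the ``outer'' Hodge--Tate weights, while $\chi_0,\chi_3$ are lifted by crystalline characters with the ``inner'' ones. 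In every case the needed 2-dimensional $\sigma$ comes from a single sub-lemma (crystalline $GL_2$-representations with Hodge--Tate gap $\ge p+1$ and reduction $\omega^{k-2}\oplus\omega$ or $\omega^{k-3}\oplus\omega^2$), which the paper proves globally via CM forms and the $\theta$-operator rather than by writing down Breuil--Kisin modules.
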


\begin{rk}\label{rk:cryst_lifts}\ 

(i) Cet \'enonc\'e fournit un indice de nature locale pour l'existence pour chaque poids $\mu\in\cW^?(\rhob_p\vert_{I_p})$, d'une repr\'esentation automorphe cuspidale $\pi$ telle que l'on ait (globalement): $\overline{\rho}_\pi=\overline{\rho}$. En effet, la repr\'esentation galoisienne associ\'ee \`a une repr\'esentation cuspidale cohomologique de poids $\mu-\rhot$
  (donc dont la composante \`a l'infini est de param\`etre de Harish--Chandra $\mu+(0,0;3)$) et de niveau premier \`a~$p$ est cristalline de poids de Hodge--Tate $\bar\mu$.
  
  (ii) L'analogue de la proposition pour ${\GL_n}_{/\Q}$ est pr\'edit par une conjecture de Gee~\cite{Gee}, \S 4.3 m\^eme
  sans supposer que $\rhob|_{I_p}$ est mod\'er\'ee et que le poids de Serre est r\'egulier. La g\'en\'eralisation de cette
  conjecture \`a beaucoup d'autres groupes r\'eductifs (incluyant ${\GSp_4}_{/\Q}$) sera le sujet de \cite{GHS}.
\end{rk}

\begin{proof}
Pour $\alpha \in \fpb\s$ notons $\ta$ le rel\`evement de Teichm\"uller de $\alpha$.

La repr\'esentation $\rhob_p$ \'etant totalement d\'ecompos\'ee, 
il existe des entiers $x \ge y \ge 0$ et $z$ avec $x+y \le p-1$ et
des $\alpha$, $\beta$, $\gamma$, $\delta \in \fpb\s$ avec $\alpha\delta = \beta\gamma$ tel que
\[ \rhob_p \sim \left(\begin{array}{cccc}\omega^{x+y}\nr(\alpha)&0&0&0\\0&\omega^{x}\nr(\beta)&0&0\\
    0&0&\omega^{y}\nr(\gamma)&0\\0&0&0&\nr(\delta)\end{array}\right) \otimes \omega^z. \]
(On utilise les \'equations~\eqref{eq:4}, \eqref{eq:3}.) Notons d'abord que l'\'enonc\'e d\'epend seulement de $\mu$ modulo $(p-1)X^0(T)$,
et donc de $F(\mu-\rhot)$,
puisqu'on peut tordre $\rho_p$ par $\epsilon^{n(p-1)}$ avec $n \in \Z$ sans changer sa r\'eduction. \'Egalement, apr\`es torsion nous pouvons
supposer que $z = 0$.

Commen\c cons par exhiber un rel\`evement cristallin \`a poids de Hodge--Tate $\bar\mu$ pour chaque \'el\'ement $\mu$ de la liste du
cor.~\ref{cor:conj_combi} (n'importe que la restriction sur $(x,y)$ est plus faible ici). Puis on v\'erifiera que l'on a obtenu ainsi (au moins) un rel\`evement
pour chaque poids de Serre dans $\cW^?(\rhob_p\vert_{I_p})$. En fait on peut d\'eduire facilement de la preuve qu'aucun des rel\`evements que l'on
a d\'ecrit ne correspond \`a un poids de Serre r\'egulier qui n'est pas pr\'edit.

Consid\'erons les quatre expressions suivantes pour $\rhob_p$:
\begin{align}
  &\omega^{x+y}\nr(\alpha) \oplus \omega^{x}\nr(\beta)\oplus \omega^{y}\nr(\gamma)\oplus \nr(\delta),\label{eq:12} \\
  &\omega^{p-1}\nr(\delta)\oplus \omega^{x}\nr(\beta)\oplus \omega^{y}\nr(\gamma)\oplus \omega^{x+y-p+1}\nr(\alpha), \label{eq:13}\\
  &\omega^{y+p-1}\nr(\gamma)\oplus \omega^{x+y}\nr(\alpha)\oplus \nr(\delta)\oplus \omega^{x-p+1}\nr(\beta), \label{eq:14}\\
  &\omega^{x+p-1}\nr(\beta)\oplus \omega^{x+y}\nr(\alpha)\oplus \nr(\delta)\oplus \omega^{y-p+1}\nr(\gamma). \label{eq:15}
\end{align}
Dans chaque cas il y a un rel\`evement cristallin \'evident obtenu en rempla\c cant $\omega$ par $\epsilon$ et $\alpha$, \ldots, $\delta$ 
par leurs rel\`evements
de Teichm\"uller. Les poids $\mu$ correspondants sont ceux des rang\'ees
\og $C_i$\fg\ qui sont situ\'es \`a la gauche.

Regardons l'\'equation~\eqref{eq:15}. Puisque $2p-2+x-y \ge 2p-2$ et $x+p-1 \ge p-1$ le sous-lemme ci-dessous montre qu'il existe des repr\'esentations
cristallines~$\sigma_i$ ($0 \le i \le 2$) telles que :
\begin{center}
\begin{tabular}{c|c|c}
 & poids de HT & r\'eduction \\
\hline 
$\sigma_0$ & \topspace{11pt}$\{x+p,y-p\}$ & $\omega^{x+p-1}\nr(\beta)\oplus \omega^{y-p+1}\nr(\gamma)$ \\
$\sigma_1$ & $\{x+p,-1\}$ & $\omega^{x+p-1}\nr(\beta)\oplus \nr(\delta)$ \\
$\sigma_2$ & $\{x+p+1,y-p-1\}$ & $\omega^{x+p-1}\nr(\beta)\oplus \omega^{y-p+1}\nr(\gamma)$ \\
\end{tabular}
\end{center}
Pour $i = 0$, $2$ on obtient le rel\`evement cristallin $\epsilon^{x+y}\nr(\ta) \oplus \sigma_i \oplus
\epsilon^{-x-y}\nr(\ta^{-1})\det\sigma_i$ dont l'image est contenue dans le Levi~$M_1$ de Klingen. \'Egalement on obtient le
rel\`evement cristallin $\sigma_1 \oplus \epsilon^{x+y}\nr(\ta\td) \cdot s \cdot {}^t\sigma_1^{-1}\cdot s$ dont l'image est
contenue dans le Levi~$M$ de Siegel. (Voir la section~\ref{sec:notations} pour les notations.) Le m\^eme argument s'applique
aux expressions \eqref{eq:14} pour $i \le 1$ et \eqref{eq:13} pour $i = 0$. Les poids~$\mu$ correspondants
sont ceux des rang\'ees \og $C_i \to C_j$\fg\ qui sont situ\'es \`a la gauche.

Ainsi on a obtenu dix rel\`evements cristallins qui correspondent aux poids~$\mu$ \`a gauche dans le tableau. Comme $\rhob_p$ peut aussi \^etre
\'ecrite sous la forme
\begin{equation*}
   \rhob_p \sim \left(\begin{array}{cccc}\omega^{x'+y'}\nr(\gamma)&0&0&0\\0&\omega^{x'}\nr(\delta)&0&0\\
       0&0&\omega^{y'}\nr(\alpha)&0\\0&0&0&\nr(\beta)\end{array}\right) \otimes \omega^{p-1-x'},
\end{equation*}
avec $(x',y') = (p-1-x,y)$ satisfaisant les m\^emes in\'egalit\'es que $(x,y)$, on obtient dix rel\`evements cristallins correspondant aux poids $\mu$ \`a droite.

D\'emontrons maintenant que l'on a exhib\'e suffisament de rel\`evements. Par la preuve de la prop.~\ref{prop:conj_combi},
les poids de Serre pr\'edits sont pr\'ecis\'ement les $F(\nu)$ o\`u $\nu'_{i,X} \uparrow \nu$ avec
$\nu \in X\reg(T)$. (Les $\nu'_{i,X}$ ont \'et\'e d\'efinis dans la d\'emonstration de cette proposition.)  La
sym\'etrie $(x,y) \leftrightarrow (x',y')$ permet de nous ramener \`a un seul $X \in \{A,B\}$ pour chaque $0 \le i \le 3$.

Commen\c cons par $i = 0$, et prenons $\nu'_{0,A}$. Comme on a $\nu'_{0,A} \in \overline C_0$, les poids $\nu+\rhot \in
X\reg(T)+\rhot$ pour lesquels $\nu'_{0,A} \uparrow \nu$ sont pr\'ecis\'ement les poids de $X\reg(T)+\rhot$ qui
figurent dans les quatre rang\'ees \og $C_0 (\to C_i)$\fg\ de gauche dans la liste. On a d\'ej\`a d\'ecrit un rel\`evement
cristallin dans chacun de ces cas.

Pour $i = 1$ prenons $\nu'_{1,B}$. Quand $x > y$, on a $\nu'_{1,B} \in \overline C_1$ et tout marche comme dans le
premier cas en utilisant les poids dans les trois rang\'ees \og $C_1 (\to C_i)$\fg\ \`a droite. Quand $x = y$
on a $\nu'_{1,B} = \nu'_{0,B}$, donc ce cas a d\'ej\`a \'et\'e
discut\'e (\`a cause de la sym\'etrie entre les cas \og A\fg\ et \og B\fg).

Pour $i = 2$ prenons $\nu'_{2,B}$. Quand $y > 0$, on a $\nu'_{2,B} \in \overline C_2$ et tout marche comme dans le
premier cas en utilisant les poids des deux rang\'ees \og $C_2 (\to C_i)$\fg\ \`a droite. Quand $y = 0$
on a $\nu'_{2,B} = \nu'_{1,B}$, donc ce cas a d\'ej\`a \'et\'e discut\'e.

Finalement pour $i = 3$ prenons $\nu'_{3,A}$. Quand $x > y+1$ on a $\nu'_{3,A} \in \overline C_3$ et tout marche comme
dans le premier cas en utilisant le poids de la rang\'ee \og $C_3$\fg\ \`a gauche. Quand $x = y+1$ on est ramen\'e au cas $i
= 0$ car $\nu'_{3,A}$ est \'egale au poids dans la rang\'ee \og $C_0 \to C_2$\fg\ \`a gauche. Quand $x = y$
on a $\nu'_{3,A} = \nu'_{2,A}$, donc ce cas a d\'ej\`a \'et\'e
discut\'e.

\begin{sublem} Soient $\alpha$, $\beta \in \fpb\s$.

\begin{enumerate}
\item Supposons que $k \ge p+2$. Alors il existe une repr\'esentation cristalline de $D_p$ de dimension~$2$ \`a poids de Hodge--Tate $\{0, k-1\}$
et de r\'eduction $\omega^{k-2}\nr(\alpha) \oplus \omega \nr(\beta)$.
\item Supposons que $k \ge 2p+3$. Alors il existe une repr\'esentation cristalline de $D_p$ de dimension~$2$ \`a poids de Hodge--Tate $\{0, k-1\}$
et de r\'eduction $\omega^{k-3}\nr(\alpha) \oplus \omega^2\nr(\beta)$.
\end{enumerate}
\end{sublem}

Pour d\'emontrer le sous-lemme, fixons d'abord un isomorphisme $\C \cong \qpb$. Soit $K$ un corps de nombres quadratique
imaginaire qui est d\'eploy\'e en~$p$, et fixons un plongement $K \to \C$.  Soit $\ell \ge 1$ un entier. Nous commen\c cons
par construire une forme parabolique de type CM, $f$, de niveau premier \`a~$p$ et de poids $\ell$ telle que $\overline{\rho_f}|_{D_p} \sim
\omega^{\ell-1} \nr(\alpha) \oplus \nr(\beta)$.

Notons $\p$ et $\overline\p$ les id\'eaux premiers divisant~$p$, tel que $\p$ correspond au plongement $K \to \C\cong \qpb$.
Supposons donn\'e un caract\`ere de Hecke $\chi : K\s \backslash \A_K\s \to \C\s$ tel que (a) le
conducteur $\f$ de $\chi$ est premier \`a~$p$, (b) $\chi(z) = z^{1-\ell}$ pour $z \in \C \cong K_\infty$, (c)
$\overline{\chi(\p)p^{1-\ell}} = \alpha$ et $\overline{\chi(\overline\p)} = \beta$, et (d) $\chi \ne \chi \circ c$, o\`u $c$
est l'unique \'el\'ement non-trivial de $\Gal(K/\Q)$.  Alors la forme de type CM associ\'ee (\cite{Mi}, th.~4.8.2),
\begin{equation*}
  f(z) = \sum_{\substack{\a \lhd \cO_K \\ (\a,\f) = 1}} \chi(\a)q^{N(\a)},
\end{equation*}
est la forme cherch\'ee.

On va m\^eme construire pour n'importe quelles racines de l'unit\'e $a$, $b$ un caract\`ere $\chi$ satisfaisant
(a), (b), (d) et (c$'$) $\chi(\p)p^{1-\ell} = a$, $\chi(\pb) = b$. En tordant par un caract\`ere satisfaisant (a) et (b) on
est r\'eduit au cas $\ell = 1$ (le seul cas d'ailleurs o\`u la condition (d) n'est pas automatique).
Puisqu'il existe des caract\`eres de Dirichlet (c'est-\`a-dire sur~$\Q$) de conducteur premier \`a~$p$ de n'importe quelle
valeur d'ordre fini en~$p$, on peut remplacer la condition (c$'$) par (c$''$): $\chi(\p)/\chi(\overline\p) = d := a/b$.

\newcommand{\xp}{x^{(p)}}

Soit $n \ge 1$ minimal tel que $(\p/\overline\p)^n = x\cO_K$ est principal. En consid\'erant les extensions possibles d'un caract\`ere
$K\s\backslash K\s \widehat{\cO_K}\s K_\infty\s \to \C\s$ \`a $K\s \backslash \A_K\s$, on peut remplacer (c$''$) par
(c$'''$): $\chi(\xp) = e := d^{-n}$ o\`u on note $\xp \in \widehat{\cO_K}\s$ l'id\`ele qui est triviale en~$p$ et~$\infty$ et qui co\"incide avec~$x$
dehors.

Alors il suffit de construire un caract\`ere $\eta : (\cO_K/\f)\s \to \C\s$ tel que $\eta(\xp) = e$ et tel que $\eta$ soit
trivial sur l'image de $\cO_K\s$.  En rempla\c cant $e$ par une 12\`eme racine de $e$ et en prenant la 12\`eme puissance de
$\eta$ \`a la fin, on peut abandonner la condition que $\eta|_{\cO_K\s}$ soit triviale (puisque $\cO_K\s$ est fini d'ordre
divisant~12). Sans perte de g\'en\'eralit\'e on peut supposer que l'ordre de $e$ est de la forme $q^r$ o\`u $q$ est premier.
Il suffit donc de trouver un id\'eal~$\f$ premier \`a~$p$ tel que l'ordre de $\xp$ dans $(\cO_K/\f)\s$ est un multiple de
$q^r$. Soit~$h$ le nombre de classes de~$K$. On va montrer qu'il existe un entier $i \ge r$ et un id\'eal $\f$ premier \`a
$p$ tel que $\f \nmid x^{12hq^{r-1}}-1$ mais $\f \mid x^{12hq^i}-1$. Choisissons $\pi \in \cO_K$ tel que $\p^h = \pi \cO_K$.
Alors il suffit de montrer que les normes des id\'eaux $(\pi^{12nq^i} - {\overline\pi}^{12nq^i})\cO_K$ (qui sont premiers
\`a~$p$) ne sont pas born\'ees. C'est un argument \'el\'ementaire utilisant que $\pi/\overline\pi$ n'est pas une racine de
l'unit\'e.

Jusqu'ici on a construit un caract\`ere $\chi$ satisfaisant (a), (b) et (c). Il suffit maintenant de construire un
caract\`ere $\chi'$ satisfaisant (a), (b), (c) et (d) dans le cas $a = b = 1$ (et toujours $\ell = 1$).  Comme ci-dessus on
r\'eduit \`a construire un caract\`ere $\eta : (\cO_K/\f)\s \to \C\s$ tel que $\eta(\xp) = 1$ et tel que $\eta \ne \eta \circ
c$. Soient $q\nmid 6$ et $q_1 \ne q_2$ des nombres premiers tels que $q_i \equiv 1\pmod
{q\operatorname{disc}(K)}$.  Alors les $q_i$ sont d\'ecompos\'es dans $K/\Q$. Choisissons $\q_i \mid q_i$ des id\'eaux premiers
de $\cO_K$ et posons $\f = \q_1\q_2$. Puisque pour chaque \'el\'ement d'un espace vectoriel de dimension deux il
existe un \'el\'ement non-trivial de l'espace dual qui l'annule, il existe $\eta : (\cO_K/\f)\s \to \C\s$ d'ordre exactement
$q$ tel que $\eta(\xp) = 1$ ; \'evidemment on a $\eta \ne \eta \circ c$.

Cela termine la construction du caract\`ere de Hecke $\chi$.

La forme parabolique $\bar f$ sur $\fpb$, la r\'eduction de $f$ modulo~$p$, satisfait $\rho_{\bar f}|_{D_p} \sim
\omega^{\ell-1}\nr(\alpha) \oplus \nr(\beta)$.  Alors la forme $\overline g = \theta \bar f$ est propre et parabolique
de m\^eme niveau, de poids $\ell+p+1$ et telle que $\rho_{\overline g} \sim \rho_{\bar f} \otimes \omega$ \cite{G}, \S 4. Par
le lemme de Deligne--Serre, il existe une forme parabolique et propre $g$ en caract\'eristique nulle de m\^eme poids et
niveau et tel que $\overline{\rho_g}|_{D_p} \sim \rho_{\overline g}|_{D_p} \sim \omega^\ell \nr(\alpha) \oplus \omega \nr(\beta)$ (notons que le poids est
maintenant au moins deux). Puisque la forme $g$ est de niveau premier \`a~$p$ et de poids $\ell+p+1$, on sait que
$\rho_g|_{D_p}$ est cristalline \`a poids de Hodge--Tate $\{\ell+p,0\}$.

La partie (i) du sous-lemme se d\'emontre en prenant $\ell = k-(p+1)$, et la partie (ii) en prenant $\ell = k-2(p+1)$ et en
appliquant $\theta$ deux fois. Bien s\^ur, on pourrait appliquer $\theta$ plusieurs fois et obtenir des r\'esultats analogues
pour $k \ge 3p$, etc.

Remarquons qu'une preuve locale du sous-lemme s'ensuit de~\cite{Pa}, th.~8.6 (avant c'\'etait connu quand $k \le 2p$~\cite{B}, \S
3.2).  Une preuve globale, plus sophistiqu\'ee mais valable en toute g\'en\'eralit\'e, r\'esulte aussi du travail de
Kisin sur la conjecture de Breuil--M\'ezard, au moins si $p > 3$~\cite{Ki}, cor.~2.3.4.
\end{proof}

Dans certains cas (comme dans le travail de l'un des auteurs sur la conjecture de modularit{\'e} des surfaces
ab{\'e}liennes), il peut \^etre utile de formuler des conjectures sur l'existence de classes compagnons en cohomologie de de
Rham modulo~$p$ (et m{\^e}me de classes de de Rham classiques ou $p$-adiques relevant ces classes) sous des hypoth{\`e}ses de
d{\'e}composabilit{\'e} pour $\rhob\vert_{I_p}$ moins fortes que la d{\'e}composabilit{\'e} totale. Nous formulons de telles
conjectures ci-dessous pour chacun des huit poids compagnons.
  
\section{D{\'e}composabilit{\'e} partielle et formes compagnons}\label{sec:twists}

Soit $\rhob=\rhob_{f,p}:\Gamma\rightarrow \GSp_4(\kappa)$, suppos\'ee absolument irr\'eductible, la repr{\'e}sentation
r{\'e}siduelle associ{\'e}e {\`a} une forme cuspidale holomorphe $p$-ordinaire $f$, de poids $(k,\ell)$, $k\geq \ell\geq 3$.
En particulier, $\rhob$ est motiviquement impaire.  Dans tout ce qui suit, on fait l'hypoth{\`e}se que $k+\ell-3<p-1$. Soit
$k=a+3$ et $\ell=b+3$; la repr\'esentation $\rhob$ admet donc le poids de Serre $F(\lambda)=W(\lambda)$ o\`u
$\lambda=(a,b;a+b)\in C_0$.

Dans la formulation ci-dessous g\'en\'eralisant la th\'eorie des formes compagnons dans le contexte de la conjecture de Serre pour ${\GL_2}_{/\Q}$ (voir par exemple \cite{G}), on va introduire des twists par des puissances du caract{\`e}re
cyclotomique modulo~$p$ de la repr{\'e}sentation globale
$\overline{\rho}$ et formuler une conjecture d'existence de formes compagnons pour ces twists sous des hypoth\`eses de d\'ecomposabilit\'e partielle. 
On pourrait proc{\'e}der exactement de la m{\^e}me mani{\`e}re avec la repr{\'e}sentation $p$-adique elle-m{\^e}me et des twists par des puissances du caract{\`e}re cyclotomique $p$-adique sous des hypoth{\`e}ses analogues pour la repr{\'e}sentation $p$-adique $\rho_{f,p}\vert_{D_p}$, mais alors, les formes compagnons dont l'existence est {\'e}galement conjectur{\'e}e seraient $p$-adiques (voir \cite{BE} pour ${\GL_2}_{/\Q}$).

\vskip 3mm

\noindent{\bf Cas 1:}  Supposons qu'on ait
$$\overline{\rho}\vert_{I_p}\sim \left(\begin{array}{cccc}\omega^{k+\ell-3}&0&*&*\\0&\omega^{k-1}&*&*\\0&0&\omega^{\ell-2}&0
\\0&0&0&1\end{array}\right).$$

On forme dans ce cas $\overline{\rho}_1=\overline{\rho}\otimes\omega^{2-\ell}$, qui est toujours motiviquement impaire. On voit qu'apr{\`e}s conjugaison par $s_0=\iota(s_1)$, on a 
$$(s_0\cdot  \overline{\rho}_1\cdot s_0)|_{I_p}=\left(\begin{array}{cccc}\omega^{k-\ell+1}&0&*&*\\0&\omega^{k-1}&*&*\\0&0&\omega^{2-\ell}&0\\0&0&0&1\end{array}\right).$$

En posant $(k',\ell')=(k+p-1,4-\ell+p-1)$, on obtient un poids cohomologique: $k'\geq \ell'\geq 3$; nous conjecturons alors qu'il existe 
une repr{\'e}sentation cuspidale $\pi'$ de ce poids, avec $\pi'_\infty$ de type holomorphe de param{\`e}tre de Harish-Chandra $(k'-1,\ell'-2;k'+\ell'-6)$ et telle que
$$\overline{\rho}_{\pi',p}\cong \overline{\rho}\otimes\omega^{2-\ell}.$$
Le poids de Serre correspondant est $F(\lambda')$ pour $\lambda'=(a+p-1,p-3-b;a+b)$ qui est situ{\'e} dans l'alc\^ ove $C_3$ si $a-b > 1$.
Remarquons que l'existence de $\pi'$ n'implique pas tout \`a fait que $F(\lambda')\in\cW(\rhob)$, mais seulement (essentiellement) que
$\rhob^\vee\subset H^3_{et}(X\times\overline{\Q},V_{\lambda'}(\fpb))$. Rappelons que le syst\`eme local $V_{\lambda'}(\fpb)$ est r\'eductible en g\'en\'eral
puisque $\lambda'\not\in C_0$ (voir \eqref{eq:1}--\eqref{eq:10} ci-dessus).

Nous verrons dans la section suivante comment ceci peut {\^e}tre d{\'e}montr{\'e} en partie (les d{\'e}tails para\^ itront
dans \cite{T2}). Les deux ingr{\'e}dients-cl{\'e} sont: le th{\'e}or{\`e}me de comparaison {\'e}tale--cristallin modulo~$p$
de Faltings \cite{Fa}, le calcul de la cohomologie de de Rham et de sa filtration de Hodge {\`a} l'aide d'un complexe
filtr{\'e}, dit \og de Bernstein--Gelfand--Gelfand dual\fg\ introduit par Faltings--Chai \cite{FC}, qui est quasi-isomorphe
au complexe de de Rham logarithmique muni de sa filtration de Hodge. Ce Cas~1 est celui qui intervient dans l'{\'e}tude de la
conjecture de Yoshida (\cite{Y} et \cite{T1}).

\vskip 3mm

\noindent{\bf Cas 2:} Supposons qu'on ait
$$\overline{\rho}\vert_{I_p} \sim \left(\begin{array}{cccc}\omega^{k+\ell-3}&*&0&*\\0&\omega^{k-1}&0&0\\ 0&0&\omega^{\ell-2}&*
\\0&0&0&1\end{array}\right).$$

On forme dans ce cas $\overline{\rho}_2=\overline{\rho} \otimes  \omega^{1-k}$. On voit qu'apr{\`e}s conjugaison par $s_0 s_1=\iota(s_1 s_0)$, on a 
$$(s_0 s_1\cdot  \overline{\rho}_2\cdot s_1s_0)|_{I_p}=\left(\begin{array}{cccc}\omega^{\ell-k-1}&0&*&0\\0&\omega^{\ell-2}&*&*\\0&0&\omega^{1-k}&0\\0&0&0&1\end{array}\right).$$

En posant $(k',\ell')=(\ell-1+p-1,3-k+p-1)$, on obtient un poids cohomologique: $k'\geq \ell'\geq 3$; nous conjecturons alors qu'il existe 
une repr{\'e}sentation cuspidale $\pi'$ de ce poids, avec $\pi'_\infty$ de type Whittaker avec param{\`e}tre de Harish-Chandra $(k'-1,\ell'-2;k'+\ell'-6)$ et telle que
$$\overline{\rho}_{\pi',p}\cong \overline{\rho}\otimes\omega^{1-k}.$$
Le poids de Serre correspondant est $F(\lambda')$ pour $\lambda'=(b+p-2,p-4-a;a+b)$ qui est situ{\'e} dans l'alc\^ ove $C_2$ si $b > 0$.

On conjecture donc que la repr{\'e}sentation $\pi'$ intervient dans le terme gradu{\'e} de degr{\'e} $k'-1$ pour la filtration de Hodge de la cohomologie de deRham logarithmique (consid{\'e}r{\'e}e comme module de Hecke):
$$\gr^{k'-1}H^3_{ldR}(X/\overline{\Z}_p,\cV_{\lambda'})=H^1(\overline{X},\omega^{k',4-\ell'}).$$
Ici $\overline{X}$ d\'esigne une compactification toro\"idale arithm\'etique de $X$ sur $\Zp$ (voir chap.\ IV de \cite{FC}).

\vskip 3mm

\noindent{\bf Cas 3:} Supposons qu'on ait
$$\overline{\rho}\vert_{I_p}\sim \left(\begin{array}{cccc}\omega^{k+\ell-3}&0&0&0\\0&\omega^{k-1}&*&0\\0&0&\omega^{\ell-2}&0
\\0&0&0&1\end{array}\right).$$

On forme dans ce cas $\overline{\rho}_3=\overline{\rho}\otimes\omega^{3-k-\ell}$. On voit qu'apr{\`e}s conjugaison par 
$s_0 s_1 s_0=\iota(s_1 s_0 s_1)$, on a 
$$(s_0 s_1 s_0\cdot  \overline{\rho}_3\cdot s_0 s_1 s_0)|_{I_p}=\left(\begin{array}{cccc}\omega^{3-k-\ell}&0&0&0\\0&\omega^{2-\ell}&*&0\\0&0&\omega^{1-k}&0\\0&0&0&1\end{array}\right).$$

En posant $(k',\ell')=(3-\ell+p-1,3-k+p-1)$, on obtient un poids cohomologique: $k'\geq \ell'\geq 3$;  nous conjecturons alors qu'il existe 
une repr{\'e}sentation cuspidale $\pi'$ de ce poids, avec $\pi'_\infty$ de type Whittaker avec param{\`e}tre de Harish-Chandra $(k'-1,\ell'-2;k'+\ell'-6)$ et telle que
$$\overline{\rho}_{\pi',p}\cong \overline{\rho}\otimes\omega^{3-k-\ell}.$$
Le poids de Serre correspondant est $F(\lambda')$ pour $\lambda'=(p-4-b,p-4-a;a+b)$ qui est situ{\'e} dans l'alc\^ ove $C_1$ si $a+b < p-5$.

On conjecture donc que la repr{\'e}sentation $\pi'$ intervient dans
\[ \gr^{\ell'-2}H^3_{ldR}(X/\overline{\Z}_p,\cV_{\lambda'})=H^2(\overline{X},\omega^{\ell'-1,3-k'}). \]

\vskip 3mm
Il y a quatre autres cas, pour lesquels
la m\'ethode de d\'emonstration que nous proposons dans la section suivante  ne s'applique probablement pas. Des discussions avec E.\ Urban et L.\ Clozel sugg\`erent que les poids de Serre pr\'edits ci-dessous donnent lieu \`a des classes $c$ de cohomologie de de Rham modulo~$p$ obtenues par r\'eduction de formes compagnons $p$-adiques ordinaires.

Notons  que seuls les quatre repr{\'e}sentants de Kostant $w\in W^M$ ont la propri{\'e}t{\'e}
d'envoyer un poids $G$ dominant sur un poids $M$-dominant par $\lambda\mapsto w\cbull \lambda$; or les caract{\`e}res $M$-dominants $(c,d;e)$ de $T$ sont ceux pour lesquels il existe un entier $f$ tel que $(f+c,f+d;e)$ soit $G$-dominant.
Ces quatre autres cas sont:

\vskip 3mm
 
\noindent{\bf Cas 0$'$:} Si
$$\overline{\rho}|_{I_p}\sim \left(\begin{array}{cccc}\omega^{k+\ell-3}&*&*&*\\0&\omega^{k-1}&0&*\\0&0&\omega^{\ell-2}&*
\\0&0&0&1\end{array}\right),$$
la repr\'esentation $\overline{\rho}'_0=\overline{\rho}$ satisfait 
 $$(s_1 \cdot  \overline{\rho}'_0\cdot s_1)|_{I_p}=\left(\begin{array}{cccc}\omega^{k+\ell-3}&*&*&*\\0&\omega^{\ell-2}&0&*\\0&0&\omega^{k-1}&*\\0&0&0&1\end{array}\right).$$

On pose $(k',\ell')=(\ell-1+p-1,k+1)$ avec $k'\geq \ell'\geq 3$. Le poids de Serre correspondant est $F(\lambda')$ pour $\lambda' = (b+p-2,a+1;a+b+p-1)$ qui est situ\'e dans l'alc{\^o}ve $C_2$ si $b > 0$. 
 
 \vskip 3mm
 
 \noindent{\bf Cas 1$'$:} Si
$$\overline{\rho}|_{I_p}\sim \left(\begin{array}{cccc}\omega^{k+\ell-3}&0&*&0\\0&\omega^{k-1}&*&*\\0&0&\omega^{\ell-2}&0
\\0&0&0&1\end{array}\right),$$
la repr\'esentation  $\overline{\rho}'_1=\overline{\rho}\otimes \omega^{2-\ell}$ satisfait 
 $$(s_1 s_0\cdot  \overline{\rho}'_1\cdot s_0s_1)|_{I_p}=\left(\begin{array}{cccc}\omega^{k-\ell+1}&*&0&*\\0&\omega^{2-\ell}&0&0\\0&0&\omega^{k-1}&*\\0&0&0&1\end{array}\right).$$

On pose $(k',\ell')=(3-\ell+p-1,k+1)$ avec $k'\geq \ell'\geq 3$. Le poids de Serre correspondant est $F(\lambda')$ pour $\lambda' = (p-4-b,a+1;a+b+p-1)$ qui est situ\'e dans l'alc{\^o}ve $C_1$ si $a > b$.

 \vskip 3mm
 
 \noindent{\bf Cas 2$'$:} Si
$$\overline{\rho}|_{I_p}\sim \left(\begin{array}{cccc}\omega^{k+\ell-3}&*&0&0\\0&\omega^{k-1}&0&0\\0&0&\omega^{\ell-2}&*
\\0&0&0&1\end{array}\right),$$
la repr\'esentation $\overline{\rho}'_2=\overline{\rho}\otimes\omega^{1-k}$ satisfait 
 $$(s_1 s_0 s_1 \cdot  \overline{\rho}'_2\cdot s_1 s_0 s_1)|_{I_p}=\left(\begin{array}{cccc}\omega^{\ell-k-1}&*&0&0\\0&\omega^{1-k}&0&0\\0&0&\omega^{\ell-2}&*\\0&0&0&1\end{array}\right).$$

On pose $(k',\ell')=(2-k+p-1,\ell)$ avec $k'\geq \ell'\geq 3$. Le poids de Serre correspondant est $F(\lambda')$ pour $\lambda' = (p-5-a,b;a+b+p-1)$ qui est situ\'e dans l'alc{\^o}ve $C_0$. 
 
 \vskip 3mm
 
 \noindent{\bf Cas 3$'$:} Si
$$\overline{\rho}|_{I_p}\sim \left(\begin{array}{cccc}\omega^{k+\ell-3}&0&0&0\\0&\omega^{k-1}&0&0\\0&0&\omega^{\ell-2}&0
\\0&0&0&1\end{array}\right),$$
la repr\'esentation $\overline{\rho}'_3=\overline{\rho}\otimes\omega^{3-k-\ell}$ satisfait 
 $$(s_0 s_1 s_0 s_1 \cdot  \overline{\rho}'_3\cdot s_1 s_0 s_1 s_0)|_{I_p}=\left(\begin{array}{cccc}\omega^{3-k-\ell}&0&0&0\\0&\omega^{1-k}&0&0\\0&0&\omega^{2-\ell}&0\\0&0&0&1\end{array}\right).$$

On pose $(k',\ell')=(2-k+2(p-1),4-l+p-1)$ avec $k'\geq \ell'\geq 3$. Le poids de Serre correspondant est $F(\lambda')$ pour $\lambda' = (2p-6-a,p-3-b;a+b+p-1)$ qui est situ\'e dans l'alc{\^o}ve $C_3$ si $a+b < p-6$.

\section{Complexe BGG dual coh{\'e}rent}\label{sec:bgg}

Dans cette section on esquisse une strat\'egie pour construire une forme compagnon dans le Cas~1 qui utilise le complexe BGG
dual. Pour l'instant cette strat\'egie ne fournit qu'une forme \emph{$p$-adique} du poids souhait\'e. Les d\'etails se
trouvent dans un travail du deuxi\`eme auteur \cite{T2}.

Posons $k=a+3$ et $\ell=b+3$, $a\geq b\geq 0$ avec $a+b+3<p-1$ et $\lambda=(a,b;a+b)$ le poids de $G$ associ{\'e}.
Soit $X$ une vari{\'e}t{\'e} de Shimura pour $G$, de niveau $N$ premier {\`a}~$p$ et net.
Soit $\cV_\lambda$ le fibr{\'e} {\`a} connexion associ{\'e} {\`a} la $\Z_{(p)}$-repr{\'e}sentation de $G$ de plus haut poids $\lambda$ 
(voir \cite{PT}, 1.9 pour l'unicit{\'e} d'un tel mod{\`e}le entier sur $\Z_{(p)}$ lorsque $\lambda \in C_0$, ce qui est le cas ici). 
Soit $\pi$ une repr{\'e}sentation cuspidale cohomologique de $G(\A)$ intervenant dans $H^3_{ldR}(X,\cV_{\lambda})\otimes_{\Z_{(p)}} \C$; $\pi$ d{\'e}finit
 alors une forme modulaire de poids $(k,\ell)$
soit holomorphe soit admettant un mod{\`e}le de Whittaker; notons que $\pi$ poss\`ede des vecteurs non nuls par le
sous-groupe de congruences principal de niveau $N$.
On fixe un plongement $\overline{\Q}\rightarrow \qpb$ et un anneau de valuation discr{\`e}te $\cO\subset \overline{\Q}_p$ fini sur $\zp$ et
 contenant les valeurs propres de $\pi$ pour les op{\'e}rateurs de Hecke $T_{q,i}$ pour tous les nombres premiers $q$ ne divisant pas le niveau $N$ (on inclut donc $q=p$). On notera $\varpi$ une uniformisante de $\cO$ et $\kappa$ son corps r\'esiduel.

On remplace $X$ et $\cV_\lambda$ par ses changements de base \`a $\zp$.
Soit $\overline{X}$ une compactification toro\"idale lisse de $X$ sur $\zp$. Le complexe BGG dual pour $G$ est un complexe filtr{\'e} 
$\cK^\expbull_\lambda$ contenu comme facteur direct dans le complexe de de Rham logarithmique $\cV_{\lambda}\otimes\Omega_{\overline{X}}^\expbull(\log)$ 
sur $\overline{X}$ (filtr\'e par la filtration convol\'ee de la filtration b\^ete de $\Omega_{\overline{X}}^\expbull(\log)$ et de la filtration de Hodge de $\cV_\lambda$). Il est constitu{\'e}
de faisceaux localement libres de rang fini. Nous renvoyons {\`a} \cite{PT} et \cite{MT} pour une description d{\'e}taill{\'e}e de sa structure enti{\`e}re sur $\zp$. L'inclusion  $\cK^\expbull_{\lambda}\hookrightarrow \cV_{\lambda}\otimes\Omega_{\overline{X}}^\expbull(\log)$
est un quasi-isomorphisme filtr{\'e} \cite{MT}, sect.~5, th.~6. Pour tout triplet d'entiers $(u,v,t)$ avec $u\geq v$, consid\'erons le faisceau $\omega^{u,v}(t)$ sur $\overline{X}$ des sections du fibr\'e automorphe associ\'e \`a la repr\'esentation ${\rm Sym}^{u-v}\otimes {\rm det}^v\Z_{p}^2(t)$ du sous-groupe de Levi $\GL_2\times \GL_1$ du parabolique de Siegel, l'action du facteur $\GL_1$ \'etant donn\'ee par la puissance $t$-i\`eme du facteur de similitudes.
On peut alors {\'e}crire les termes du complexe $\cK^\expbull_{\lambda}$; ils sont plac{\'e}s en degr{\'e}
$w$, $w+1$, $w+2$ et $w+3$, o\`u $w=a+b+3$ (le poids motivique de $\pi$): 
$$\omega^{3-\ell,3-k}(k+\ell-6)\rightarrow \omega^{\ell-1,3-k}(k-4)\rightarrow \omega^{k,4-\ell}(\ell-5) \rightarrow \omega^{k,\ell}(-3),$$
les morphismes {\'e}tant des op{\'e}rateurs diff{\'e}rentiels induits par la connexion de Gauss--Manin.
On sait que la connexion de Gauss--Manin commute aux correspondances alg\'ebriques: les diff\'erentielles ci-dessus commutent donc aux op\'erateurs de Hecke.
Cependant si l'on omet les twists par des puissances du facteur de similitudes, il faut les r\'eintroduire dans les formules de commutation.

\begin{rk}
  Les twists n'interviennent pas pour les consid\'erations g\'eom\'etriques qui suivent, mais sont cruciaux pour la
  description des valeurs propres de Hecke sur la forme compagnon qu'on veut construire.
\end{rk}

La filtration de Hodge sur le complexe de de Rham induit sur le complexe BGG dual la filtration b\^ete. On en d\'eduit les gradu\'es  de $H^3_{ldR}(X,\cV_{\lambda})$:
\begin{equation}\label{eq:gr} 
\begin{split}
\gr^0 H^3_{ldR}=H^3(\overline{X},\omega^{3-\ell,3-k}),&\quad 
\gr^{\ell-2}H^3_{ldR}=H^2(\overline{X},\omega^{\ell-1,3-k}), \\
\gr^{k-1}H^3_{ldR}=H^1(\overline{X},\omega^{k,4-\ell}),&\quad
\gr^{k+\ell-3}H^3_{ldR}=H^0(\overline{X},\omega^{k,\ell}).
\end{split}
\end{equation}

Les hypoth{\`e}ses de d{\'e}composabilit{\'e} partielle des Cas 1, 2, 3 de la section pr{\'e}c{\'e}dente ont une traduction
via le th{\'e}or{\`e}me de comparaison {\'e}tale--cristallin de Faltings \cite{Fa}, en termes de stabilit{\'e} par le
Frobenius cristallin $\Phi$ de certains sous-quotients de la filtration de Hodge sur le $\Phi$-module filtr{\'e} $\overline{M}_\pi$
associ{\'e} {\`a} la repr{\'e}sentation contragr\'ediente de $\overline{\rho}_{\pi,p}$. Par la th{\'e}orie de
Fontaine--Laffaille, on a $\overline{M}_\pi=M_\pi\otimes_\cO \kappa$ o{\`u} $M_\pi$ est le $\Phi$-module filtr{\'e}, libre de
rang~4 sur $\cO$, associ{\'e} {\`a} la repr{\'e}sentation contragr\'ediente de $\rho_{\pi,p}$; par \cite{MT}, $M_\pi$ est
facteur direct du $\cO$-module $H^3_{cris}(\overline{X}_{/\F_p},\cV_\lambda) \otimes \cO$ muni du Frobenius cristallin $\Phi$
et de la filtration convol\'ee d\'ecrite ci-dessus. Par ordinarit\'e de $\rho_{\pi,p}$, on a une suite exacte courte de
$\Phi$-modules filtr\'es
$$0\rightarrow M'_\pi\rightarrow M_\pi\rightarrow M''_\pi\rightarrow 0
$$
o\`u $M'_\pi$ et $M''_\pi$ sont libres de rang $2$ sur $\cO$; et la surjection $M_\pi\rightarrow M''_\pi$ induit ${\rm Fil}^{k-1}M_\pi\cong M''_\pi$ et ${\rm Fil}^{k+\ell-3}M_\pi\cong {\rm Fil}^{k+\ell-3}M''_\pi$.  
Soit $P_{f,p}(X)=(X-\alpha)(X-\beta)(X-\gamma)(X-\delta)$ le polyn\^ome de Hecke en $p$. Ses racines sont ordonn\'ees pour que
$\alpha, {\beta\over p^{\ell-2}},{\gamma\over p^{k-1}},{\delta\over p^{k+\ell-3}}$ soient des unit\'es $p$-adiques. Par \cite{U}, les valeurs propres de $\Phi$
sur $M_\pi$, resp. sur $M''_\pi$, sont $\alpha,\beta,\gamma,\delta$, resp. $\gamma,\delta$. 
\vskip 3mm

Dans le Cas~1, on suppose que $\pi$ est donn{\'e}e par une forme cuspidale holomorphe $f$ ordinaire en $p$; pour tout premier
$q$ ne divisant pas $Np$, on note $a_{q,i}$ les valeurs propres des op\'erateurs de Hecke $T_{q,i}$ ($i=1,2$). On
consid{\`e}re la forme diff{\'e}rentielle $\omega_f\in H^0(\overline{X},\omega^{k,\ell})\otimes\cO$ associ\'ee \`a $f$. Cette
forme d{\'e}finit une classe dans ${\rm Fil}^{k+\ell-3}M_\pi$.  Soit $[\overline{\omega}_f]\in {\rm
  Fil}^{k+\ell-3}\overline{M}_\pi$ la classe de cohomologie de sa r\'eduction modulo $\varpi$. La traduction de l'hypoth\`ese
de scindage partiel du Cas~1 en termes de la th\'eorie de Fontaine--Laffaille est que le $\Phi$-module filtr\'e
$\overline{M}''_\pi$ est scind\'e. C'est-\`a-dire que ${\rm Fil}^{k+\ell-3} \overline M''_\pi$, le dernier cran non nul de la
filtration de Hodge de ce module, est stable par le Frobenius.
C'est donc que ${\Phi-\delta\over p^{k+\ell-3}}$ annule ${\rm Fil}^{k+\ell-3} \overline M''_\pi$
(un argument similaire \`a celui de
\cite{FJ} et \cite{BE}), et on en d\'eduit

\begin{lem}[\cite{T2}, prop.\ 9.1]\label{lm:image-nulle} L'image par ${\Phi-\delta\over p^{k+\ell-3}}$ de $[\overline{\omega}_f]$ est d'image nulle
  dans $\overline{M}''_\pi$.
\end{lem}

 Soit $V$, resp. $\overline{V}$ le lieu ordinaire de $X$, resp. $\overline{X}$; c'est le $\Z_p$-sous-sch{\'e}ma formel ouvert du
compl{\'e}t{\'e} de $\overline{X}$ le long de la fibre sp{\'e}ciale, d{\'e}fini par la non-annulation modulo~$p$ de
l'invariant de Hasse $H$. L'endomorphisme de Frobenius associ\'e au sous-groupe canonique agit sur le $\Z_p$-sch{\'e}ma formel $\overline{V}$. Son action sur la cohomologie log de Rham $H^\expbull_{ldR}(V,\cV_\lambda)$
est compatible avec celle du Frobenius cristallin $\Phi$.

On peut donc
faire agir ${\Phi-\delta\over p^{k+\ell-3}}$ sur la restriction $\omega_f\vert_{\overline V}$ de $\omega_f\in H^0(\overline{X},\omega^{k,\ell})\otimes \cO$ {\`a} $\overline{V}$;
un calcul de $q$-d\'eveloppement facile montre  

\begin{lem}[\cite{T2}, lemme 10.6] La forme $\xi_f=({\Phi-\delta\over p^{k+\ell-3}})\omega_f\vert_{\overline V}\in
  H^0(\overline{V},\omega^{k,\ell})\otimes \cO$ est non-nulle modulo~$\varpi$.
\end{lem}

Soit $X^*$ la compactification minimale de $X$ et notons $\pi : \overline X \to X^*$ le morphisme canonique. Soit
$\overline{V}' := \overline V - \pi^{-1}(X_0)$ o\`u $X_0 \subset X^*$ et l'ensemble fini de pointes.
Soit $\cH^{Np}$ l'alg\`ebre de Hecke abstraite hors de $Np$ sur $\cO$ et $\m$ l'id\'eal maximal engendr\'e par $\varpi$ et
les $T_{q,i}-a_{q,i}$. On sait que la diff\'erentielle $d:\omega^{k,4-\ell}(\ell-5)\rightarrow \omega^{k,\ell}(-3)$ commute
\`a l'action des correspondances de Hecke. La partie technique de la d\'emonstration du th\'eor\`eme principal consiste alors
\`a \'etablir la proposition suivante.

\begin{pro}[\cite{T2}, th.\ 3] La localisation en $\m$ de la suite spectrale donn\'ee par la filtration b\^ete sur le
  complexe BGG dual induit une suite exacte courte
$$0\rightarrow H^0(\overline{V}',\omega^{k,4-\ell}(\ell-5))_\m\rightarrow H^0(\overline{V}',\omega^{k,\ell}(-3))_\m\rightarrow
H^{w+3}(\overline{V}',\cK^\expbull_\lambda)_\m\rightarrow 0$$
dont la r\'eduction modulo $p$ reste exacte. De plus, le morphisme
$$M_\pi \subset H^3_{ldR}(X,\cV_{\lambda})_\m = H^{w+3}(\overline X, \cK^\expbull_\lambda)_\m
\rightarrow H^{w+3}(\overline{V}',\cK^\expbull_\lambda)_\m$$
se factorise par la projection $M_\pi\rightarrow M''_\pi$.
\end{pro}

Notons que $H^0(\overline{V}',\omega^{i,j}) = H^0(\overline{V},\omega^{i,j}) = H^0({V},\omega^{i,j})$ par le principe
de Koecher.

Consid\'erons alors l'id\'eal maximal $\widetilde{\m}$ de $\cH^{Np}$ engendr\'e par $\varpi$ et
les $T_{q,i}-q^{-i(\ell-2)}a_{q,i}$.
Par d\'efinition du twist par $\ell-2$, on voit que $H^0(\overline{V},\omega^{k,4-\ell}(\ell-5))_\m=H^0(\overline{V},\omega^{k,4-\ell}(-3))_{\widetilde{\m}}(\ell-2)$.

Il r\'esulte ais\'ement de la proposition et du lemme~\ref{lm:image-nulle} que la r{\'e}duction $\overline{\xi}_f$ de $\xi_f$
modulo~$\varpi$ est dans l'image du morphisme $\omega^{k,4-\ell}(\overline{V})_{\widetilde{\m}}(\ell-2)\otimes
\kappa\rightarrow \omega^{k,\ell}(\overline{V})_\m\otimes\kappa$ induit par la diff\'erentielle $d:\cK^{w+2}_\lambda\rightarrow
\cK_\lambda^{w+3}$ du complexe BGG dual.

Ceci montre qu'il existe  
une forme $\overline{\omega}_{g_1}$ ant\'ec\'edente de $\overline{\xi}_f$ par $d^2$, vecteur propre g\'en\'eralis\'e pour les
op{\'e}rateurs de Hecke hors de $Np$ et pour $U_{p,2}$ (on ne sait cependant pas que $\overline{g}_1$ est
vecteur propre g\'eneralis\'e pour $U_{p,1}$). La forme $\overline{g}_2 = \overline{g}_1 H$ obtenue par multiplication par
l'invariant de Hasse $H$
est de poids $(k',\ell')$ pour $k'=k+(p-1) \ge \ell'=4-\ell+(p-1) \ge 3$ est \'el\'ement de l'espace propre g\'en\'eralis\'e
$H^0(V,\omega^{k',\ell'})_{\widetilde{\m}}\otimes \kappa$.
Comme cet espace est r\'eunion croissante d'espaces de dimension finie stables
par les op\'erateurs de Hecke, il contient une forme propre $\overline{g}_3$. Soit $]\overline V[$ le tube rigide de l'ouvert
$\overline V$ de la fibre sp\'eciale de $\overline X$.
Par une variante simple du Going-Down Lemma de Cohen--Seidenberg, le syst\`eme de valeurs propres de $\overline g_3$
se rel\`eve en un syst\`eme de valeurs propres en caract\'eristique z\'ero, qui d\'efinit une forme propre $p$-adique dans
$H^0(]\overline V[,\omega^{k',\ell'})\otimes \cO$
qui a les propri\'et\'es voulues: elle est propre pour les $T_{q,i}$ ($q$ premier \`a $p$) et
pour $U_{p,2}$ et ses valeurs propres $\lambda_{q,i}$ sont congrues modulo $\varpi$ aux valeurs propres
$q^{-i(\ell-2)}a_{q,i}$ pour $q$ premier \`a $Np$, et $\lambda_{p,2}$ est congrue \`a ${\alpha\beta\over p^{\ell-2}}$ (elle
est en particulier ordinaire pour $U_{p,2}$).  Voir \cite{T2}, sect.~10.3 pour les d\'etails.
Cette forme $p$-adique est conjecturalement ordinaire pour $U_{p,1}$. Il est facile de
voir, par exemple par la th\'eorie de Hida, qu'elle serait alors classique.
L'\'enonc\'e contenu dans \cite{T2} (voir section~5) est le suivant

\begin{thm}
  Soit $N$ un entier sans facteurs carr{\'e}s et soient $k$, $\ell$ deux entiers tels que $k\geq \ell\geq 3$. Soit $f$ une
  forme cuspidale holomorphe de poids $(k,\ell)$ de niveau $N$, propre pour les op{\'e}rateurs de Hecke $T_{q,1}, T_{q,2}$
  \($q$ premier {\`a} $N$\). Soit $p$ premier tel que $p-1>k+\ell-3$, $p$ ne divise pas $N$, et tel que $f$ est ordinaire
  en~$p$ \(voir d\'ef.~\ref{df:ord}\). Soit $\rho_{f,p}:{\rm Gal}(\overline{\Q}/\Q)\rightarrow \GSp_4(\cO)$ la
  repr{\'e}sentation galoisienne $p$-adique associ{\'e}e et $\overline{\rho}=\overline{\rho}_{f,p}$ sa r{\'e}duction modulo
  l'id{\'e}al maximal $(\varpi)$ de $\cO$. Supposons que
  \begin{enumerate}
  \item l'image de $\overline{\rho}$ contient $\Sp_4(\kappa')$ pour un corps fini $\kappa'$,
  \item $\overline{\rho}$ est minimalement ramifi{\'e}e en tout premier divisant $N$ \(cf.\ \cite{GT}, sect.~3\),
  \item les nombres $\alpha$, ${\beta\over p^{\ell-2}}$, ${\gamma\over p^{k-1}}$, ${\delta\over p^{k+\ell-3}}$ sont deux \`a
    deux distincts dans le corps r\'esiduel,
  \item on est dans le Cas~1 pour la restriction de $\overline{\rho}$ {\`a} $I_p$.
  \end{enumerate}
  Alors, il existe une forme $g$ cuspidale $p$-adique de poids $(k',\ell')=(k+(p-1),4-\ell+(p-1))$ et de niveau $N$, propre
  pour les op{\'e}rateurs de Hecke $T_{q,1}, T_{q,2}$ \($q$ premier {\`a} $pN$\), $T_{p,2}$-ordinaire, telle que
  $$\overline{\rho}_{g,p}=\overline{\rho}_{f,p}\otimes\omega^{2-\ell}.$$
\end{thm}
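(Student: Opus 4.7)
Je suivrais la m\'ethode esquiss\'ee juste avant l'\'enonc\'e, g\'en\'eralisant \cite{FJ} et \cite{BE}, en quatre \'etapes principales. La premi\`ere est une analyse cristalline locale: en appliquant le th\'eor\`eme de comparaison \'etale--cristallin de Faltings \cite{Fa} et la th\'eorie de Fontaine--Laffaille (applicable gr\^ace \`a l'hypoth\`ese $k+\ell-3 < p-1$), on traduit l'hypoth\`ese de d\'ecomposabilit\'e partielle 4) du Cas~1 en une propri\'et\'e de stabilit\'e par Frobenius d'un sous-quotient convenable du $\Phi$-module filtr\'e $\overline{M}_\pi$. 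Plus pr\'ecis\'ement, la droite $\text{Fil}^{k+\ell-3}\overline{M}_\pi$, engendr\'ee par la classe $[\overline{\omega}_f]$, est annul\'ee par $Q(\Phi/p^{k+\ell-3})$ o\`u $Q$ est un diviseur cubique convenablement normalis\'e du polyn\^ome de Hecke $P_{f,p}(X)$, obtenu essentiellement en supprimant le facteur correspondant \`a la pente z\'ero $\alpha$.

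Deuxi\`eme \'etape: construire la forme $\xi_f$ sur le lieu ordinaire. Sur $\overline{V} \subset \overline{X}$, o\`u le Frobenius cristallin est bien d\'efini au niveau du sch\'ema formel, on pose $\xi_f = Q(\Phi/p^{k+\ell-3}) \omega_f$, qui est non-nulle modulo $\varpi$ par inspection du $q$-d\'eveloppement: l'hypoth\`ese 3) de distinct\'e des pentes assure que $Q$ n'annule pas l'unique vecteur propre de pente z\'ero du Frobenius. Par chasse dans le complexe BGG dual restreint \`a $\overline{V}$, en utilisant l'affinit\'e du blowing-down $V^*$ pour garantir l'exactitude des sections globales du complexe, on trouve un ant\'ec\'edent $g_1 \in H^0(\overline{V} \otimes \kappa, \omega^{k,4-\ell})$ tel que $d\, g_1 = \overline{\xi}_f$ o\`u $d$ est l'op\'erateur diff\'erentiel du complexe BGG.

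Troisi\`eme \'etape: v\'erifier que $g_1$ est une eigenform avec les bonnes valeurs propres. L'op\'erateur $d$ \'etant homog\`ene de degr\'e $\ell-1$ en poids (cf.~\cite{T2}), les valeurs propres de $T_{q,1}, T_{q,2}$ sur $g_1$ (pour $q \nmid Np$) sont celles de $f$ divis\'ees respectivement par $q^{\ell-1}$ et $q^{2(\ell-1)}$, ce qui est compatible avec le twist galoisien $\overline{\rho} \otimes \omega^{2-\ell}$. En $p$, le choix de $Q$ garantit que $g_1\vert T_{p,1} \equiv (\beta/p^{\ell-2})\, g_1 \pmod{\varpi}$, d'o\`u la $p$-ordinarit\'e. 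Quatri\`eme \'etape: on pose $g = g_1 \cdot H$ o\`u $H$ est l'invariant de Hasse; $g$ est alors de poids cohomologique $(k',\ell') = (k+p-1, p+3-\ell)$ satisfaisant $k' \geq \ell' \geq 3$. La th\'eorie de Hida pour $GSp_4$, combin\'ee avec les hypoth\`eses 1) et 2), assure alors le prolongement de $g$ \`a tout $\overline{X} \otimes \kappa$ et son rel\`evement en caract\'eristique z\'ero en une forme classique $p$-ordinaire, avec pr\'eservation des propri\'et\'es eigen.

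L'obstacle principal r\'eside dans le contr\^ole fin des op\'erateurs de Hecke \`a travers toute la construction: il faut tracer leur action \`a travers chaque \'etape du diagramme BGG dual, choisir $g_1$ de mani\`ere compatible (d\'efini a priori seulement \`a un \'el\'ement du noyau de $d$ pr\`es), et utiliser l'irr\'eductibilit\'e 1) et la ramification minimale 2) pour identifier la repr\'esentation galoisienne associ\'ee \`a $g$ comme \'etant $\overline{\rho}_{f,p} \otimes \omega^{2-\ell}$ via la compatibilit\'e locale-globale. Un point technique d\'elicat suppl\'ementaire est le rel\`evement de Hida pour $GSp_4$ en poids $\ell' = p+3-\ell$ potentiellement proche de $p$, ainsi que la non-annulation modulo $\varpi$ de $\xi_f$, qui requiert une analyse soigneuse des $q$-d\'eveloppements aux pointes.
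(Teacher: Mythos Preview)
Your proposal follows essentially the same approach as the paper's own argument: the paper gives only a proof sketch (in the ``Cas~1'' paragraph preceding the theorem, with details deferred to~\cite{T2}), and your four steps --- the Fontaine--Laffaille/comparison translation yielding the cubic divisor $Q$, the construction of $\xi_f$ on the ordinary locus and the BGG antecedent $g_1$ via affineness of $V^*$, the Hecke-eigenvalue bookkeeping through the degree-$(\ell-1)$ differential, and the multiplication by the Hasse invariant followed by Hida-theoretic lifting --- match that sketch point for point. One minor caution: you identify $Q$ as the cubic obtained by removing the slope-zero factor $(X-\alpha)$, but the paper does not specify this explicitly, and the choice of $Q$ must be the one compatible with the conclusion $b_{p,1}\equiv \beta/p^{\ell-2}$, so be careful when you write out the details of which factor is actually suppressed.
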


L'existence d'une forme compagnon classique dans le Cas~1 ainsi que pour les six autres syst\`emes de valeurs propres de
Hecke (c'est-\`a-dire pour les huit premiers poids de Serre), a \'et\'e \'etablie r\'ecemment par Gee--Geraghty \cite{GG}
sous des hypoth\`eses faibles, par un th\'eor\`eme de modularit\'e reposant sur des r\'esultats nouveaux de rel\`evement
modulaire.

\vskip 3mm

\vskip 5mm

\noindent{\small F.\ Herzig, Institute for Advanced Study, Einstein Drive, Princeton, NJ 08540.
  {\'E}tats-Unis.
 
 \noindent {\tt herzig@math.ias.edu}}

\medskip
 
\noindent{\small J.\ Tilouine,   D{\'e}partement de Math{\'e}matiques, UMR 7539,
 Institut Galil{\'e}e, Universit{\'e} de Paris 13, 93430 Villetaneuse. France.
 
 \noindent {\tt tilouine@math.univ-paris13.fr}}

\end{document}